\DeclareMathAlphabet{\pazocal}{OMS}{zplm}{m}{n}
\def\pl{\mathop{\hbox{\rm P}_l}}
\def\pc{\mathop{\hbox{\rm P}_c}}
\def\supp{\mathop{\hbox{\rm Supp}}}
\def\bis{\mathop{\hbox{\rm Bis}_{\hbox{\rm c}}}}
\newtheorem{lemma}{Lemma}[section]
\newcolumntype{L}[1]{>{\raggedright\let\newline\\\arraybackslash\hspace{0pt}}m{#1}}
\newcolumntype{C}[1]{>{\centering\let\newline\\\arraybackslash\hspace{0pt}}m{#1}}
\newcolumntype{R}[1]{>{\raggedleft\let\newline\\\arraybackslash\hspace{0pt}}m{#1}}
\def\l{\lambda}
\def\a{\alpha}
\def\g{\gamma}
\def\m{\mu}
\def\remove#1{}
\def\geo{\G_E^{(0)}}
\def\path{\mathop{\hbox{\rm Path}}}
\newcommand{\G}{{\mathcal{G}}}
\newcommand{\Z}{{\mathbb{Z}}}
\def\im{\mathop{\hbox{\rm Im}}}
\newtheorem{corollary}[lemma]{Corollary}
\newtheorem{theorem}[lemma]{Theorem}
\newtheorem{proposition}[lemma]{Proposition}
\newtheorem*{proposition*}{Proposition}
\newtheorem*{corollary*}{Corollary}
\newtheorem*{theorem*}{Theorem}
\newtheorem{remark}[lemma]{Remark}
\theoremstyle{definition}
\newtheorem{example}[lemma]{Example}
\newcommand{\ZZ}{\mathbb{Z}}
\newcommand{\0}{^{(0)}}
\DeclareMathOperator{\soc}{soc}
\DeclareMathOperator{\Span}{span}
\DeclareMathOperator{\Supp}{Supp}
\title{ On the socle of a class of  Steinberg algebras}
\author[L. Clark]{Lisa Orloff Clark}
\address{L.O. Clark: Victoria University of Wellington, Wellington, New Zealand}
\email{lisa.orloffclark@vuw.ac.nz}
\author[C. Gil]{Crist\'obal Gil Canto}
\address{C. Gil Canto: Departamento de Matem\'atica Aplicada, E.T.S. Ingenier\'\i a Inform\'atica, Universidad de M\'alaga, 
M\'alaga,   Spain.}
\email{cgilc@uma.es}
\author[D. Mart\'{\i}n]{Dolores Mart\'{\i}n Barquero}
\author[C. Mart\'{\i}n]{C\'andido Mart\'{\i}n Gonz\'alez}
\address{D. Martín Barquero: Departamento de Matem\'atica Aplicada, Escuela de Ingenier\'\i as Industriales, Universidad de M\'alaga, Campus de Teatinos s/n. 29071 M\'alaga.   Spain.}
\email{dmartin@uma.es}
\address{C. Martín González: Departamento de \'Algebra Geometr\'{\i}a y Topolog\'{\i}a, Fa\-cultad de Ciencias, Universidad de M\'alaga, Campus de Teatinos s/n. 29071 M\'alaga. Spain.} \email{candido\_m@uma.es}
\author[I. Ruiz]{Iv\'an Ruiz Campos}
\address{I. Ruiz Campos:  Departamento de \'Algebra Geometr\'{\i}a y Topolog\'{\i}a, Fa\-cultad de Ciencias, Universidad de M\'alaga, 
M\'alaga, Spain.}
\email{ivaruicam@uma.es}
\subjclass[2020]{16S99,  22A22, 16D25, 16D70, 18B40, 46L55, 16S88. }
\keywords{Socle, groupoid, Steinberg algebra}
\thanks{ The authors are supported by the Spanish Ministerio de Ciencia e Innovaci\'on through projects  PID2019-104236GB-I00/AEI/10.13039/501100011033 and PID2023-152673NB-I00 and by the Junta de Andaluc\'{\i}a  through project  FQM-336,  all of them with FEDER funds. The fourth author is supported by a Junta de Andalucía PID fellowship no. PREDOC\_00029. The third and fourth authors are partially funded by grant Fortalece 2023/03 of "Comunidad Autónoma de La Rioja".}
\begin{document}
\begin{abstract}
We study minimal left ideals in Steinberg algebras of Hausdorff groupoids.  We establish a relationship between minimal left ideals in the algebra and open singletons in the unit space of the groupoid. We apply this to obtain results about the socle of Steinberg algebras under certain hypotheses. This encompasses known results about Leavitt path algebras and improves on Kumjian-Pask algebra results to include higher-rank graphs that are not row-finite.
\end{abstract}

\maketitle

\color{black}
\section{Introduction} 
Steinberg algebras, introduced in \cite{Steinberg1} and independently in \cite{CFST}, are algebras associated to ample groupoids. The main purpose of this paper is to lay the foundation for a socle theory for Steinberg algebras of Hausdorff groupoids that generalises the socle theory of the Leavitt path algebras \cite{socletheory,pino2008socle} and the socle theory of Kumjian-Pask algebras \cite{kpsoc}. We take inspiration from these graph algebra papers and also from results about the group algebra of a finite group, since the three classes of algebras are examples of Steinberg algebras. 

The study of the socle of a group algebra is an intricate issue, with no definitive answer, only partial results have been given. That is why in this work we specialize to a case generalizing both Leavitt path and Kumjian-Pask algebras.

After a preliminaries section, the first step in our analysis is to study minimal left ideals, which we do in Section~\eqref{sec:leftideals}. Following \cite[Theorem 3.4]{pino2008socle}, the existence of minimal left (resp. right) ideals, corresponds with the existence of special elements in the graph. In more general Steinberg algebras, we show that minimal left (resp. right) ideals correspond with particular elements in the groupoid, that is,  elements in the unit space such that their isotropy group is finite (trivial in most of the cases we are interested in) and such that the singleton set containing each one of them is open, see Proposition~\ref{prop_existence_singleton} and Theorem~\ref{puerto}. As an application, we apply our results to Kumjian-Pask algebras of finitely aligned $k$-graphs and generalise \cite[Theorem~3.9]{kpsoc} for row-finite $k$-graphs.  Finally, in Section~\eqref{sec:socle}, we describe the socle of an arbitrary Steinberg algebra, proving our main theorem in 
Theorem~\ref{thm:main}.

\section{Basic definitions and preliminary results}

\subsection{Steinberg algebras}
A {\it groupoid} $\G$ is a generalisation of a group in which the ‘binary operation’ is only partially defined (see \cite{Renault} for definition). 
Let $\G$ be a groupoid and $R$ a commutative (unital) ring. The {\it unit space} of $\G$ is the set 
\[\{xx^{-1} \colon x\in \G\}=\{x^{-1}x \colon x\in \G\}\] formed by  its idempotent elements  that we denote $\G^{(0)}$. We define $r,s \colon \G \to \G^{(0)}$ by $r(x)=x x^{-1}$ and $s(x)= x^{-1}x$ the range and source maps respectively. We denoted by $\G^{(2)}$ the set of all pairs $(x,y)\in \G\times \G$ such that $s(x)=r(y)$ (that is, all composable pairs). 
We consider maps $f\colon \G\to R$ such that $\im(f)=f(\G)$ is finite.
If $f,g$ are two such maps and $r\in R$, then $f+g, rf\colon \G\to R$ can be defined pointwise: $(f+g)(x)=f(x)+g(x)$ and $(rf)(x)=r f(x)$ for any $x,y\in \G$. Thus the set of such maps is an $R$-module. 

 A {\it topological groupoid} is a groupoid equipped with a topology such that composition and inversion are continuous. The groupoid $\G$ is said to be  {\it étale} if the domain map  $s$ is a local homeomorphism. It is known that when $\G$ is étale, the range map $r$ is also a local homeomorphism. An open subset $B$ of a topological groupoid $\G$ is an {\it open bisection} if $r$ and $s$ restricted to $B$ are homeomorphisms onto an open subset of $\G^{(0)}$. 
 The set of all compact open bisections of a groupoid $\G$ will be denoted $\bis(\G)$.
 We say that a topological groupoid $\G$ is {\it ample} if  there is a basis for its topology consisting of compact open bisections. The  étale groupoids with totally disconnected unit spaces are the ample groupoids, see \cite[Proposition~4.1]{Exel}. An ample groupoid is automatically étale, locally compact and $\G^{(0)}$ is an open subset of $\G$.
 
If $\G$ is a groupoid $\G$, its {\it isotropy subgroupoid} is the set 
$ {\rm Iso}(\G) = \cup_{u\in\G^{(0)}}\G_u^u$
where $\G_u^u$ is the set of elements of $\G$ whose range and
source is $u$. A topological groupoid $\G$ will be called {\it effective} if the interior of $ {\rm Iso}(\G)$ coincides with its unit space $\G^{(0)}$. A subset $U$ of the unit
space $\G^{(0)}$ of $\G$ is said to be {\it invariant} if $s(\l)\in U$ implies $r(\l)\in U$, this is equivalent to $r(s^{-1}(U)) = U = s(r^{-1}(U))$. If $U$ is an invariant subset of $\G^{(0)}$ we define $\G_U := s^{-1}(U)$ which turns out to be a groupoid with unit space $U$. A groupoid $\G$ is said to be {\it strongly effective} if for every nonempty closed invariant subset $V$ of $\G^{(0)}$, the groupoid $\G_V$ is effective.
 
 Suppose $\G$ is an ample Hausdorff groupoid and $R$ is a commutative unital ring. The {\it Steinberg algebra} associated to $\G$ denoted $A_R(\G)$ is the $R$-algebra of all locally constant functions $f\colon \G \to R$ such that \[
 \hbox{Supp} f\coloneqq\{x \in \G \colon f(x)\ne 0\}
 \]is compact. So, $\hbox{Supp} f$ is open and compact for $f\in A_R(\G)$ and  
 \[A_R(\G) = \Span{ \{1_B \colon B \hbox{ is a compact open bisection}\}},
 \]
 where $1_B \colon \G \to R$ is the characteristic function of $B$. Addition and scalar multiplication are defined as above (pointwise).  Multiplication is given by convolution
 $f g\colon \G\to R$ such that 
 \[
fg(x)=\sum_{ab=x}f(a)g(b),
\]
 which is a finite sum because functions have compact support. 
 This convolution formula is such that for compact open bisections $B$ and $D$ we have $1_B1_D = 1_{BD}$. 
The charactaristic functions of compact open subsets of the unit space form a collection of local units for the Steinberg algebra.  See, for example, 
\cite[Lemma~2.6]{ECP}.

In an ample groupoid, the collection of all compact open bisections forms an inverse semigroup with respect to the product an inverse defined lemma below.  Study of this inverse semigroup was made explicit in \cite{Paterson} as well as \cite{Steinberg1}, providing an inverse semigroup view of the  introduction of Steinberg algebras. In particular, we have the following lemma, the details of which are provided in \cite[Lemma 2.1, p. 30]{Simon1}:

\begin{lemma} \label{deS}
    Let $\G$ be an étale groupoid where $\G^{(0)}$ is Hausdorff. 
    If $A, B, C\subset \G$ 
are compact open bisections, then
\begin{enumerate}
\item $A^{-1}= \{a^{-1}\colon a\in A\}$ and $AB = \{ab\colon (a, b)\in(A\times B)\cap \G^{(2)} \}$ are compact
open bisections.
\item If $\G$ is Hausdorff, then $A\cap B$ is a compact open bisection.
\end{enumerate}
\end{lemma}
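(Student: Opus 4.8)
The plan is to verify, for each of the three constructions, the three defining properties of a compact open bisection: compactness, openness in $\G$, and that $s$ and $r$ restrict to homeomorphisms onto open subsets of $\G^{(0)}$. The case of $A^{-1}$ is the quickest: inversion $\iota\colon\G\to\G$ is a homeomorphism (it is continuous and involutive), so $A^{-1}=\iota(A)$ is open and compact, and since $s\circ\iota=r$ and $r\circ\iota=s$ on all of $\G$, the restrictions $r|_{A^{-1}}=s|_A\circ\iota$ and $s|_{A^{-1}}=r|_A\circ\iota$ are composites of homeomorphisms onto open subsets of $\G^{(0)}$.

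For $AB=\{ab\colon (a,b)\in(A\times B)\cap\G^{(2)}\}$, which is the image of $(A\times B)\cap\G^{(2)}$ under the continuous multiplication map, I would first get compactness: since $\G^{(0)}$ is Hausdorff its diagonal is closed, so $\G^{(2)}$ is closed in $\G\times\G$; hence $(A\times B)\cap\G^{(2)}$ is a closed subset of the compact space $A\times B$, thus compact, and so is its continuous image $AB$. The main obstacle is openness of $AB$, and this is precisely the place where étaleness is used. I would fix $x_0=a_0b_0\in AB$, choose an open bisection $W\ni x_0$ (étale groupoids have a basis of open bisections), and observe that $\{w\in W\colon s(w)\in s(B)\}$ is open and that on it the assignment $g(w):=w\cdot\bigl((s|_B)^{-1}(s(w))\bigr)^{-1}$ is continuous; then $W\cap AB$ equals $\{w\in W\colon s(w)\in s(B),\ g(w)\in A\}$, which is open and contains $x_0$ since $g(x_0)=a_0$. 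Hence $AB$ is open. (Alternatively, one may simply invoke the standard fact that multiplication in an étale groupoid is an open map.)

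It remains, for $AB$, to check the bisection property. Using that $A$ and $B$ are bisections, $s|_{AB}$ is injective: from $s(a_1b_1)=s(a_2b_2)$ one gets $s(b_1)=s(b_2)$, hence $b_1=b_2$, and then $s(a_1)=s(a_2)$, hence $a_1=a_2$. Its image is the open set $s\bigl((r|_B)^{-1}(s(A)\cap r(B))\bigr)$, and the inverse $u\mapsto (s|_A)^{-1}\bigl(r((s|_B)^{-1}(u))\bigr)\cdot(s|_B)^{-1}(u)$ is continuous, so $s|_{AB}$ is a homeomorphism onto an open subset of $\G^{(0)}$; the argument for $r|_{AB}$ is symmetric.

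Finally, for $A\cap B$ one uses the stronger hypothesis that $\G$ itself is Hausdorff: then the compact set $B$ is closed, so $A\cap B$ is closed in the compact space $A$, hence compact; it is open as an intersection of open sets; and $s|_{A\cap B}$ and $r|_{A\cap B}$ are restrictions of the homeomorphisms $s|_A$ and $r|_A$ to the open subset $A\cap B$ of $A$, hence homeomorphisms onto open subsets of $s(A),r(A)\subseteq\G^{(0)}$. Overall, the only genuinely delicate point is the openness of the product $AB$; everything else is routine bookkeeping with the bisection property and with the two Hausdorff hypotheses.
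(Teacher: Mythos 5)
Your proof is correct. Note that the paper does not prove this lemma at all: it simply imports it, citing \cite[Lemma 2.1, p.~30]{Simon1}, so there is no in-paper argument to compare against; what you have written is essentially the standard proof one finds in that reference. All the genuinely delicate points are handled properly: compactness of $AB$ via closedness of $\G^{(2)}$ in $\G\times\G$ (which indeed only needs $\G^{(0)}$ Hausdorff, since $\G^{(2)}$ is the preimage of the diagonal under $(a,b)\mapsto(s(a),r(b))$), openness of $AB$ via the local argument with $g(w)=w\cdot\bigl((s|_B)^{-1}(s(w))\bigr)^{-1}$ (equivalently, openness of multiplication in an \'etale groupoid), injectivity of $s|_{AB}$ and $r|_{AB}$ from the bisection property of $A$ and $B$, openness of $s(AB)=s\bigl((r|_B)^{-1}(s(A)\cap r(B))\bigr)$, and continuity of the explicit inverse. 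Two cosmetic remarks: the requirement that your neighbourhood $W$ of $x_0$ be a bisection is never used (any open neighbourhood, or indeed $W=\G$, works, since $g$ is defined on all of $\{w:s(w)\in s(B)\}$), and the set $C$ in the statement is a typo of the paper, so nothing is missing by ignoring it.
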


\begin{remark}
\label{Donsimon}\rm The unit space of an ample Hausdorff groupoid is a locally compact Boolean space. See, for example, \cite[Proposition~3.6]{Steinberg1}. 
Recall by 
that
if $B,C\subset\G^{(0)}$, then $BC=B\cap C$. 
\end{remark}

If $A$ is an algebra (with local units), the socle of $A$ (more precisely the left socle) is the sum of all the minimal left ideals of the algebra. The right socle is defined similarly. We know that left socle and right socle agree (\cite[Theorem 1, Chapter IV, \S 3]{jacobson}) in a semiprime algebra but Steinberg algebras are not necessarily semiprime. So we adopt the convention that the socle of an algebra (denoted $\soc(A)$), stands for the left socle. 

Conditions under which a Steinberg algebra is semiprime have been investigated in \cite[Theorem 4.8]{Steinberg2}:
if $R$ is a commutative ring with unit and $\G$ an effective Hausdorff ample groupoid, then $A_R(\G)$ is semiprime if and only if $R$ is reduced.
As a consequence, if $\G$
is an effective Hausdorff ample groupoid, and $K$ is a field,  then $A_K(\G)$ is a semiprime algebra. In this case the left socle and the right socle of $A_K(\G)$
agree. For the rest of the manuscript, $K$ will always denote a field. 

\section{Minimal left ideals}
\label{sec:leftideals}
 
 There are many examples of Leavitt path algebras that have non-trivial minimal left ideals, as can be seen in \cite{lpasocle}. The same is true for finite dimensional group algebras (see below for the details). These examples are classes of Steinberg algebras such that the underlying groupoid has an open singleton in the unit space with a trivial or finite isotropy group respectively.  Before formalising this observation in Theorem~\ref{Alhaurin}, we state a standard lemma.  
\begin{lemma}\label{queenstown}
    Let $\G$ be a groupoid and $x,y \in \G^{(0)}$. If $y \G x \neq \emptyset$, then for a fixed element $\beta \in y \G x$, we have $y\G x= \beta x \G x$ and $|x \G x | = |y \G x|$. 
\end{lemma}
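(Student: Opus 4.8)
The plan is to use left translation by the fixed element $\beta$ to produce an explicit bijection between the isotropy group $x\G x$ and the set $y\G x$, and to read off the set equality $y\G x=\beta x\G x$ as a byproduct. Throughout, $z\G w$ denotes $\{g\in\G\colon r(g)=z,\ s(g)=w\}$, so $x\G x$ is the isotropy group at $x$; the only tool needed is the bookkeeping of source and range maps under composition and inversion.

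First I would record that, since $\beta\in y\G x$, we have $r(\beta)=y$ and $s(\beta)=x$, hence $\beta^{-1}\in x\G y$ with $r(\beta^{-1})=x$ and $s(\beta^{-1})=y$. For the inclusion $\beta x\G x\subseteq y\G x$: given $\gamma\in x\G x$, the pair $(\beta,\gamma)$ is composable because $s(\beta)=x=r(\gamma)$, and then $r(\beta\gamma)=r(\beta)=y$ while $s(\beta\gamma)=s(\gamma)=x$, so $\beta\gamma\in y\G x$. For the reverse inclusion $y\G x\subseteq\beta x\G x$: given $\delta\in y\G x$, the pair $(\beta^{-1},\delta)$ is composable since $s(\beta^{-1})=y=r(\delta)$, and the same computation gives $\beta^{-1}\delta\in x\G x$; writing $\delta=\beta(\beta^{-1}\delta)$ then exhibits $\delta$ as an element of $\beta x\G x$. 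This proves $y\G x=\beta x\G x$.

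For the cardinality statement, I would observe that the map $x\G x\to y\G x$, $\gamma\mapsto\beta\gamma$, is well defined by the first inclusion, injective by left cancellation of the invertible element $\beta$ (if $\beta\gamma_1=\beta\gamma_2$ then $\gamma_1=\beta^{-1}\beta\gamma_1=\beta^{-1}\beta\gamma_2=\gamma_2$), and surjective by the second inclusion; hence it is a bijection and $|x\G x|=|y\G x|$. I do not anticipate any real obstacle here: the entire argument is formal manipulation with the groupoid axioms, and the only point requiring care is checking composability (matching of source and range) at each product that is formed.
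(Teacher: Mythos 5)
Your proposal is correct and follows essentially the same route as the paper: left translation by $\beta$, with $\delta\mapsto\beta^{-1}\delta$ providing the inverse, gives both the set equality $y\G x=\beta\, x\G x$ and the equality of cardinalities. Your write-up merely makes explicit the composability checks and the two inclusions that the paper leaves implicit.
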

\begin{proof}
    Since  $y \G x \neq \emptyset$ there exists $\beta \in y \G x$. Consider the map $\phi \colon x\G x \to y \G x$ such that $\gamma \mapsto \phi(\gamma) := \beta \gamma $. This map is bijective since  $\psi \colon y\G x \to x\G x$ such that $\delta \mapsto \psi(\delta):= \beta^{-1}    \delta$ is its inverse. The result follows.
\end{proof}

The structure of the group algebra of a certain isotropy group is fundamental to our results so we delve into some of the details now. If $G$ is a finite group with $n$ elements, the group algebra $KG$ for $K$ a field is $n$-dimensional. This algebra has nontrivial socle, that is, it possesses minimal left (resp. right)  ideals.  In particular, using the Steinberg algebra notation,  the ideal $K(\sum_{g \in G}1_{\{g\}})$ is minimal. Defining $f\coloneqq\sum_{g \in G}1_{\{g\}}$, observe that $f^2=nf$. Other minimal left ideals are present in two flavours:
\begin{itemize}
\item[(i)] Those generated by a division idempotent, that is, an idempotent $a$ such that $a(KG)a$ is a division ring. This case occurs precisely when the characteristic of $K$ does not divide $n$ because then $f'=\frac{1}{n}f$ is a division idempotent.
\item[(ii)] Those generated by an absolute zero divisors, that is, an element $a$ such that $a(KG)a =0$. Concretely, if the characteristic of $K$ divides $n$, then $f$ itself is an absolute zero divisor. In particular, $f$ is nilpotent of nilindex $2$.
\end{itemize}
In the situation where $KG$ is a semiprime algebra, the unique zero absolute divisor is $0$. Hence, minimal left ideals of the second type (ii) do not exist. This dichotomy appears also in our study, as we show in the following theorem.

\begin{theorem}\label{Alhaurin}
    Let $\G$ be an ample Hausdorff groupoid and $x \in \G^{(0)}$. If $\{x\}$ is a compact open set such that $|x\G x|= n < \infty$, then the left ideal $I = A_K(\G) \sum_{\alpha \in x\G x} 1_{\{\alpha\}}$  is minimal. Furthermore:
    \begin{enumerate}
        \item If the characteristic of $K$ does not divide $n$, then $e:=\frac{1}{n}\sum_{\a\in x\G x}1_{\{\a\}}$ is an idempotent.
        \item If the characteristic of $K$  divides $n$, then $f=\sum_{\alpha \in x\G x} 1_{\{\alpha\}}$ is a nilpotent element of nilindex $2$.\end{enumerate}
\end{theorem}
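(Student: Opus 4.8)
The plan is to identify the relevant corner of $A:=A_K(\G)$ with the group algebra of the isotropy group $G:=x\G x$, and to transport up to $A$ the (well-understood) minimal left ideal $K\bigl(\sum_{g\in G}g\bigr)$ of $KG$.

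First I would check that for each $\alpha\in x\G x$ the set $\{\alpha\}$ is a compact open bisection: since $\{x\}$ is open in $\G^{(0)}$, the set $s^{-1}(\{x\})$ is open in $\G$, and intersecting it with a compact open bisection $B$ containing $\alpha$ gives $B\cap s^{-1}(\{x\})=\{\alpha\}$ because $s|_B$ is injective; this singleton is then open, and compact because $\G$ is Hausdorff. Hence $1_{\{\alpha\}}\in A$ for every $\alpha\in x\G x$. Put $u:=1_{\{x\}}$, an idempotent. A direct convolution computation gives $ugu=\sum_{\alpha\in x\G x}g(\alpha)\,1_{\{\alpha\}}$ for every $g\in A$, so $uAu=\Span\{1_{\{\alpha\}}:\alpha\in x\G x\}$; since the $1_{\{\alpha\}}$ have pairwise disjoint supports and $1_{\{\alpha\}}1_{\{\beta\}}=1_{\{\alpha\beta\}}$ by Lemma~\ref{deS}, the assignment $\alpha\mapsto 1_{\{\alpha\}}$ extends to an algebra isomorphism $KG\to uAu$ carrying $\sum_{g\in G}g$ to $f$. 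The elementary identity $\bigl(\sum_{g\in G}g\bigr)^2=n\sum_{g\in G}g$ then gives $f^2=nf$, from which (1) and (2) of the ``Furthermore'' are immediate: if $\ch K\nmid n$ then $e=\frac1n f$ satisfies $e^2=\frac1{n^2}f^2=e$, and if $\ch K\mid n$ then $f^2=0$ while $f\neq 0$ (linear independence of the $1_{\{\alpha\}}$), so $f$ has nilindex $2$.

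For the minimality of $I=Af$, note that $1_{\{\alpha\}}f=f$ for all $\alpha\in x\G x$, so $(uAu)f=Kf$ is a one-dimensional, hence minimal, left ideal of $uAu$. It then suffices to prove the following general statement and apply it with $a=f$: \emph{if $u$ is an idempotent of an algebra $A$ and $a\in uAu\setminus\{0\}$ is such that $(uAu)a$ is a minimal left ideal of $uAu$, then $Aa$ is a minimal left ideal of $A$} (here $I=Af\neq 0$ because $f=uf\in Af$). To prove it, take a nonzero left ideal $L\subseteq Aa$ of $A$ and $0\neq b\in L$; I must show $Ab=Aa$. If $ub\neq 0$, then writing $b=ca$ and using $a=ua$ gives $ub=(ucu)a\in(uAu)a$ with $ub\neq 0$, so the left ideal $(uAu)(ub)$ of $uAu$ is nonzero and contained in the minimal ideal $(uAu)a$, hence equals it; consequently $a=ua\in(uAu)a=(uAu)(ub)\subseteq A(ub)\subseteq Ab$, so $Aa\subseteq Ab\subseteq L\subseteq Aa$ and $L=Aa$.

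It remains to reduce the case $b\neq 0$, $ub=0$ to the previous one. Every element of $Af$ vanishes outside $s^{-1}(\{x\})$ (since $\supp f\subseteq s^{-1}(\{x\})$ and $s(pq)=s(q)$ for composable $p,q$), so $b$ is supported there; pick $\gamma_0\in\G$ with $s(\gamma_0)=x$ and $b(\gamma_0)\neq 0$. As above, $\{\gamma_0\}$, and hence $\{\gamma_0^{-1}\}$ by Lemma~\ref{deS}, is a compact open bisection, and a one-line convolution computation gives $\bigl(1_{\{\gamma_0^{-1}\}}b\bigr)(x)=b(\gamma_0)\neq 0$. Thus $b':=1_{\{\gamma_0^{-1}\}}b$ lies in $L$ and in $Aa$, is nonzero, and satisfies $ub'\neq 0$, so the previous case applied to $b'$ gives $Aa=Ab'\subseteq Ab\subseteq L\subseteq Aa$, i.e. $L=Aa$. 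In every case $L=Aa$, so $I$ is minimal. The \emph{main obstacle} is exactly this last case: in contrast with the semiprime (division-idempotent) situation, a nonzero element of a minimal left ideal need not ``see'' the corner idempotent $u$, and it is the ampleness of $\G$ that lets us produce from $b$ an element of $L$ that does. The rest---the corner identification, the identity $f^2=nf$, and the first case---is routine.
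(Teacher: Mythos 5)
Your argument is correct, and in substance it runs on the same two facts as the paper's proof, just packaged differently. The paper works directly inside $A_K(\G)$: given $f$ with $f\sum_i 1_{\{\alpha_i\}}\neq 0$, it reduces to $f=f1_{\{x\}}$, groups the singleton-bisection terms of $f$ by their ranges, uses Lemma~\ref{queenstown} to write them as $1_{\{\gamma_j\alpha_k\}}$, exploits that right multiplication by $\sum_i 1_{\{\alpha_i\}}$ collapses each group (the same reordering fact as your identity $1_{\{\alpha\}}f=f$), and then exhibits the explicit left inverse $\frac{1}{\Omega}1_{\{\gamma_1^{-1}\}}$ recovering $\sum_i 1_{\{\alpha_i\}}$. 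You instead pass through the corner $1_{\{x\}}A_K(\G)1_{\{x\}}\cong K[x\G x]$ (which the paper only records later, in Remark~\ref{lemma_corner_steinberg}), note that $(uAu)f=Kf$ is one-dimensional, and transfer minimality up to $A_K(\G)$, using translation by $1_{\{\gamma_0^{-1}\}}$ to reach elements not killed by $u$ --- exactly the translation the paper's $g=\frac{1}{\Omega}1_{\{\gamma_1^{-1}\}}$ performs. Your version makes the group-algebra picture explicit; the paper's is one self-contained computation. The ``Furthermore'' parts coincide.

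One caution: the auxiliary statement you enunciate --- for an idempotent $u$ and $a\in uAu\setminus\{0\}$ with $(uAu)a$ minimal in $uAu$, the ideal $Aa$ is minimal in $A$ --- is false in that generality. Take $A$ the upper triangular $2\times 2$ matrices over $K$ and $u=a=e_{22}$: then $(uAu)a=Ke_{22}$ is minimal in $uAu\cong K$, but $Aa=Ke_{12}+Ke_{22}$ properly contains the nonzero left ideal $Ke_{12}$, all of whose elements are killed by $u$ on the left. This is precisely the case $ub=0$, which in general cannot be reduced to $ub\neq 0$. Your proof is not damaged, because you never use the general statement: the case $ub=0$ is handled with the Steinberg-specific support and translation argument, which is sound. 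But you should present that step as a proof of the needed instance (for $a=f$ in $A_K(\G)$), not as a proof of a general ring-theoretic lemma.
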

\begin{proof}
    Let $x\G x = \{\alpha_1, \alpha_2,\ldots,\alpha_n\}$. 
     Since $\{x\}$ is a compact open set, each $\{\alpha_i\}$ is  a compact open bisection  and the left ideal 
     \[
     I = A_K(\G) \sum_{\alpha \in x\G x} 1_{\{\alpha\}}=A_K(\G) \sum_{i=1}^n 1_{\{\alpha_i\}}
     \]
     is well defined. In order to prove that $I$ is minimal, we show that for every $f \in A_K(\G)$ such that $f * \sum_{i=1}^n1_{\{\alpha_i\}}\neq 0$, there exists $g \in A_K(\G)$ such that $g*f*\sum_{i=1}^n1_{\{\alpha_i\}} = \sum_{i=1}^n1_{\{\alpha_i\}}$. Take such and $f$ and write is as $f = \sum_{l=1}^t b_l1_{B_l}$ with $B_l$ a compact open bisection and $b_l \in K$ for every $l=1,\ldots,t$. Since 
     \[
     f *\sum_{i=1}^n1_{\{\alpha_i\}} = f *1_{\{x\}}*\left (\sum_{i=1}^n1_{\{\alpha_i\}} \right )\neq 0,
     \]
     then $f*1_{\{x\}} \neq 0$ and without loss of generality we can assume that $f * 1_{\{x\}} = f$, that is, $f = \sum_{l=1}^tb_l1_{\{\delta_l\}}$ such that each $s(\delta_l)=x$.

     After a possible reordering of terms, we can break this sum into groups corresponding to the ranges of the $\delta_l$'s, that is
     \[
     f = \sum_{l = 1}^{t_1}b_l1_{\{\delta_l\}}+\sum_{l=t_1+1}^{t_2}b_l1_{\{\delta_l\}}+\cdots +\sum_{l=t_{m-1}+1}^{t_m} b_l 1_{\{\delta_l\}}
     \]
     with $r(\delta_{l})= x_k$ for every $t_k+1 \leq l \leq t_{k+1} $ and $x_i \ne x_j$ if $i\ne j$. Using Lemma~\ref{queenstown}, we can find $\{\gamma_1,\ldots,\gamma_m\} \subset \G$ such that $r(\gamma_j) = x_j$ and $s(\gamma_j) = x$ for every $j = 1,\ldots,m$ and write 
     \[
     f = \sum_{j = 1}^m\sum_{k=1}^{n} a_{jk}1_{\{\gamma_j\alpha_k\}}
     \]
     where $a_{jk} \in K$ for every $j= 1,\ldots,m$ and $k = 1,\ldots,n$.  If we multiply $f*\sum_{i=1}^n 1_{\{\alpha_i\}}$ we have that 
    \begin{align*}
         f*\sum_{i=1}^n 1_{\{\alpha_i\}} &=\sum_{j=1}^m\sum_{i,k= 1}^{n}a_{jk}1_{\{\gamma_j\alpha_k\alpha_i\}} \\&= \sum_{j=1}^m\sum_{k=1}^{n} a_{jk}1_{\{\gamma_j\}}\sum_{i=1}^n 1_{\{\alpha_k\alpha_i\}}.
    \end{align*}

The sum $\sum_{i=1}^n 1_{\{\alpha_k\alpha_i\}}$ is just a reordering of the original sum $\sum_{i=1}^n 1_{\{\alpha_i\}}$ and hence 
    \begin{equation*}
        f*\sum_{i=1}^n 1_{\{\alpha_i\}} = \left(\sum_{j=1}^{m}\left (\sum_{k=1}^n a_{jk} \right )1_{\{\gamma_j\}}\right ) \left (\sum_{i=1}^n1_{\{\alpha_i\}}\right ). 
    \end{equation*}
    Since $f * \sum_{i=1}^n1_{\{\alpha_i\}} \neq 0$ there is at least some $j$ such that $\sum_{k=1}^n a_{jk} \neq 0$. Without loss of generality we can fix $j = 1$ and take $g = \frac{1}{\Omega} 1_{\{\gamma_1^{-1}\}}$ with $\Omega = \sum_{k=1}^n a_{1k} \neq 0$. Then we have that
    \begin{equation*}
        g * f * \sum_{i=1}^n 1_{\{\alpha_i\}} = \frac{1}{\Omega} \Omega 1_{\{x\}}\left (\sum_{i=1}^n1_{\{\alpha_i\}}\right ) = \sum_{i=1}^n1_{\{\alpha_i\}}. 
    \end{equation*}
    If $\text{char}(K)$ does not divide $n$, then  the element $e:=\frac{1}{n}\sum_{\a\in x\G x}1_{\{\a\}}$  is an idempotent since
    \[e^2=\frac{1}{n^2}\sum_{i=1}^n\sum_{j=1}^n 1_{\{\a_i\a_j\}}=\frac{1}{n^2}n\sum_{i=1}^n1_{\{\alpha_i\}}= e,\]
    otherwise $\left(\sum_{i = 1}^n1_{\{\a_i\}}\right)^2=0$. 
\end{proof}

\begin{remark}\rm
    As a consequence of Theorem~\ref{Alhaurin}, a sufficient condition for the existence of a
nonzero socle in a
Steinberg algebra (of an ample Hausdorff groupoid), is the existence of an $x\in \G^{(0)}$ such that  $\{x\}$ is a compact open set with
$x\G x$ finite. 
\end{remark}

\begin{remark}\label{tejeringo}\rm
    Theorem \ref{Alhaurin} says that the minimal left ideal $I$ is presented in two flavors: in the first one, it is generated by a division idempotent, while in the second one, the generator is an absolute zero divisor. 
If the characteristic of the ground field does not divide $n$, the element $e=\frac{1}{n}\sum_{i=1}^n 1_{\{\alpha_i\}}$ is an idempotent of $A_K(\G)$. So the left ideal generated by $e$, that is, $I=A_K(\G)e$, which is a minimal left $A_K(\G)$-module, is such that $\hbox{End}_{A_K(\G)}(I)$ is a division $K$-algebra.
But $\hbox{End}_{A_K(\G)}(I)\cong e A_K(\G) e$ as $K$-algebras. So the corner
$e A_K(\G) e$ is also a division $K$-algebra. That is why we say that $e$ is a division idempotent.

On the other hand, if the characteristic of $K$ divides $n$, then Theorem~\ref{Alhaurin} says $\sum_{i=1}^n 1_{\{\a_i\}}$ is nilpotent of nilindex $2$.
Furthermore, in this case, the left ideal $I$ satisfies $I^2=0$. This is because $\sum_{i=1}^n1_{\{\a_i\}}$ is an absolute divisor of zero, that is, $\sum_{i =1}^n1_{\{\a_i\}}A_K(\G) \sum_{j=1}^n1_{\{\a_j\}}=0$. Equivalently, we must prove that
$\sum_{i=1}^n1_{\{\a_i\}} 1_B \sum_{j=1}^n1_{\{\a_j\}}=0$ for any compact open bisection $B$. Thus we
  need to check that for any  $\g\in x\G x$ we have 
$\sum_{i,j=1}^n 1_{\{\a_i\g\a_j\}}=0$.
 But
 we have $\sum_{i,j=1}^n 1_{\{\a_i\g\a_j\}}=\sum_{i=1}^n 1_{\{\a_i\}}\sum_{j=1}^n1_{\{\g\a_j\}}=\sum_{i=1}^n 1_{\{\a_i\}}\sum_{j=1}^n 1_{\{\a_j\}}=0$.
\end{remark}

In the case where the isotropy at a unit $x$ in Theorem~\ref{Alhaurin} has trivial isotropy, the following corollary is immediate.  

\begin{corollary}\label{cor_1xminimal}
    Let $\G$ be an ample Hausdorff groupoid and let $x \in \G^{(0)}$ such that $\{x\}$ is open and $x \G x = \{x\}$. Then $A_K(\G)1_{\{x\}}$ is a minimal left ideal and $1_{\{x\}}A_K(\G)1_{\{x\}} = K 1_{\{x\}}$.
\end{corollary}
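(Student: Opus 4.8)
The plan is to deduce both assertions directly from Theorem~\ref{Alhaurin} applied in the degenerate case $n=1$. The hypothesis $x\G x=\{x\}$ means $|x\G x|=1$, and since $\ch(K)$ never divides $1$, part~(1) of Theorem~\ref{Alhaurin} applies verbatim: the left ideal
\[
I=A_K(\G)\sum_{\alpha\in x\G x}1_{\{\alpha\}}=A_K(\G)1_{\{x\}}
\]
is minimal, and $e=\tfrac{1}{1}\,1_{\{x\}}=1_{\{x\}}$ is an idempotent. (The latter is of course also transparent on its own, since $\{x\}\subseteq\G^{(0)}$ is a compact open bisection and $1_{\{x\}}1_{\{x\}}=1_{\{x\}\cdot\{x\}}=1_{\{x\}}$ by Remark~\ref{Donsimon}.) This settles the first assertion.

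For the identity $1_{\{x\}}A_K(\G)1_{\{x\}}=K1_{\{x\}}$ I would argue on spanning sets. Since $A_K(\G)=\Span\{1_B : B\in\bis(\G)\}$ and multiplication is $K$-bilinear, it suffices to compute $1_{\{x\}}1_B1_{\{x\}}$ for an arbitrary compact open bisection $B$. By Lemma~\ref{deS} together with Remark~\ref{Donsimon} we have $1_{\{x\}}1_B1_{\{x\}}=1_{\{x\}B\{x\}}$, and $\{x\}B\{x\}$ is again a compact open bisection. Any $\gamma\in\{x\}B\{x\}$ satisfies $r(\gamma)=s(\gamma)=x$, hence $\gamma\in x\G x=\{x\}$, so $\gamma=x$; therefore $\{x\}B\{x\}$ is either $\emptyset$ or $\{x\}$, and $1_{\{x\}}1_B1_{\{x\}}\in\{0,1_{\{x\}}\}$. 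Taking $K$-linear combinations gives $1_{\{x\}}A_K(\G)1_{\{x\}}\subseteq K1_{\{x\}}$, while the reverse inclusion is immediate from $1_{\{x\}}=1_{\{x\}}1_{\{x\}}1_{\{x\}}\in 1_{\{x\}}A_K(\G)1_{\{x\}}$.

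There is no genuine obstacle here: the statement is a true corollary, and the only step requiring a moment's care is the elementary bookkeeping showing $\{x\}B\{x\}\subseteq x\G x$, after which the hypothesis $x\G x=\{x\}$ forces the corner to be one-dimensional. One could instead invoke the minimality of $I=A_K(\G)1_{\{x\}}$ to see that $1_{\{x\}}A_K(\G)1_{\{x\}}\cong\End_{A_K(\G)}(I)$ is a division $K$-algebra containing $K1_{\{x\}}$, but since a division algebra need not coincide with $K$, one would still be left with exactly the same short computation to conclude that the corner is no larger than $K1_{\{x\}}$.
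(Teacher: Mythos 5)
Your proposal is correct and follows the same route as the paper, which simply declares the corollary immediate from Theorem~\ref{Alhaurin} in the case $n=1$ (trivial isotropy). Your explicit verification that $\{x\}B\{x\}\subseteq x\G x=\{x\}$ forces $1_{\{x\}}A_K(\G)1_{\{x\}}=K1_{\{x\}}$ just fills in the short computation the paper leaves implicit.
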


In an associative algebra with local units, like a Steinberg algebra, all minimal left (resp. right) ideals are of the form $A_K(\G)a$ (resp. $aA_K(\G)$) for some $a \in A_K(\G)$. When first approaching the problem,  in order for the ideal to be minimal, we wanted the element $a$ has to be ``very small''. Our intuition says that if the compact open  bisections that define $a$ are very small (very few elements in them), then $a$ will also be small and possibly the ideal that it generates it will be minimal. However, there is the possibility that the ideal is not generated by a ``small'' element but it is isomorphic to one. In this sense, we describe a result analogous to \cite[Theorem 4.13]{socletheory} (see also \cite[Lemma 3.1]{kpsoc}). First we stablish two lemmas.

\begin{lemma}\label{lem:new} Let $\G$ be an ample Hausdorff groupoid and let $A_K(\G)b$ be a minimal left ideal for some $b \in A_K(\G)$.  Then there exists $a \in A_K(\G)$ with $A_K(\G) b \cong A_K(\G)a$ as left $A_K(\G)$-modules such that 
 $a$ has the form 
 \[a = a_01_C+\sum_{i=1}^na_i1_{B_i}\]
 with $C \subset \G^{(0)}$, $B_i \subset \G\setminus \G^{(0)}$, $B_i\cap B_j = \emptyset$ for every $i \neq j$ and $r(B_i) \subseteq C$.
\end{lemma}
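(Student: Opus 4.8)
The plan is to start from an arbitrary element $b$ with $A_K(\G)b$ minimal and massage $b$, up to left-module isomorphism, into the required normal form. First I would write $b = \sum_{l=1}^t b_l 1_{B_l}$ as a linear combination of characteristic functions of compact open bisections. Since the $1_U$ for $U$ a compact open subset of $\G^{(0)}$ form local units, there is a compact open $U\subseteq\G^{(0)}$ with $1_U b = b$; fix such a $U$. The key reduction is that any element $a$ with $A_K(\G)a$ a minimal left ideal satisfies $A_K(\G)a \cong A_K(\G)(ca)$ for any $c$ with $ca\neq 0$ (left multiplication by $c$ is a nonzero, hence by minimality injective, $A_K(\G)$-module map from $A_K(\G)a$ onto $A_K(\G)ca$). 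So I am free to replace $b$ by $1_D\, b$ for any compact open $D\subseteq\G^{(0)}$ as long as the result is nonzero, and more generally by $1_E\, b$ for $E$ a compact open bisection.

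The main structural step is to separate the ``diagonal'' part of $b$ (its restriction to $\G^{(0)}$) from the ``off-diagonal'' part and to arrange the off-diagonal bisections to be disjoint with ranges inside a single compact open subset of $\G^{(0)}$. Here is how I would do the disjointness: by refining the bisections $B_l$ — using that in a Hausdorff ample groupoid finite intersections and set differences of compact open bisections are again compact open bisections (Lemma~\ref{deS} and Remark~\ref{Donsimon}) — I can rewrite $b$ as a linear combination $\sum a_i 1_{D_i} + \sum c_j 1_{E_j}$ where the $D_i\subseteq\G^{(0)}$ are pairwise disjoint compact open sets and the $E_j\subseteq\G\setminus\G^{(0)}$ are pairwise disjoint compact open bisections. (This is the standard ``normal form'' for elements of a Steinberg algebra; one partitions the union $\bigcup B_l$ into finitely many pieces on which all the relevant characteristic functions are constant, then separates those pieces lying in $\G^{(0)}$ from those lying off it, using that $\G^{(0)}$ is clopen in an ample Hausdorff groupoid.) The diagonal part $\sum a_i 1_{D_i}$ can then be consolidated: since the $D_i$ are disjoint, this equals $a_0 1_C$ on the locus where all the coefficients agree — but in general the coefficients differ, so instead I would absorb the diagonal data differently. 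Concretely, let $C$ be a compact open subset of $\G^{(0)}$ containing $\bigcup_i D_i \cup \bigcup_j r(E_j)$; I claim we may take $a$ of the form $a_0 1_C + \sum_j c_j 1_{E_j}$ after a further module isomorphism, or, if that is not possible, the statement as written permits $C$ to be any single compact open set and merely asks $r(B_i)\subseteq C$, so I would choose $C$ once and for all and it is the $B_i := E_j$ that must have ranges in $C$ — which holds by construction. The subtle point is collapsing the several diagonal terms $\sum a_i 1_{D_i}$ into the single term $a_0 1_C$: multiply $b$ on the left by a suitable element supported on $\G^{(0)}$ to rescale, or note that minimality of $A_K(\G)b$ forces the diagonal part to be very constrained — but this requires care.

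The hard part will be precisely this last consolidation of the diagonal component into a single scalar multiple of one characteristic function $1_C$. I expect the right argument is: the element $1_C b$ (which is isomorphic as a module generator to $b$, provided it is nonzero) has diagonal part $\sum_i a_i 1_{D_i}$; multiplying on the left by the idempotent $\sum_i a_i^{-1} 1_{D_i}$ (restricting to the $D_i$ with $a_i\neq 0$) would turn each diagonal coefficient into $1$, and then the diagonal part becomes $1_{C'}$ for $C' = \bigcup\{D_i : a_i\neq 0\}$; but one must check this left multiplication is nonzero and interacts correctly with the off-diagonal terms (left multiplication by $1_{D_i}$ kills the off-diagonal bisections whose range misses $D_i$, which is why one needs to have first arranged all the $r(E_j)$ to lie in $C'$, forcing an iteration or a more careful bookkeeping). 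Once the diagonal part is $1_{C'}$, set $C := C'$, rename the surviving off-diagonal bisections as $B_1,\dots,B_n$ with coefficients $a_1,\dots,a_n$, note they are disjoint with $r(B_i)\subseteq C$ by construction, and observe $a_0$ (the coefficient of $1_C$) is $1$ — or we simply keep it as an arbitrary scalar, which the statement allows. I would present the disjointness-refinement as a short lemma-internal computation and flag the diagonal-collapse as the step meriting the most detail, verifying at each left multiplication that the resulting element is still a nonzero generator of a module isomorphic to the original minimal left ideal.
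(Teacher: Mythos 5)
There is a genuine gap, and it sits exactly at the step you yourself flag as ``the hard part''. Your plan only ever multiplies $b$ on the left by functions supported on the unit space (the rescaling element $\sum_i a_i^{-1}1_{D_i}$, or $1_D$ for $D\subseteq\G^{(0)}$). Such multiplications can only restrict or rescale an existing diagonal part; they can never create one. So the plan fails outright in the perfectly possible case $\supp(b)\cap\G^{(0)}=\emptyset$ (e.g.\ $b=1_B$ with $B$ a compact open bisection disjoint from $\G^{(0)}$): there are no sets $D_i$ to rescale, and no amount of diagonal bookkeeping produces the required term $a_01_C$ with $a_0\neq 0$. The missing idea is the paper's single move: write $b=\sum_{i=0}^n a_i1_{B_i}$ with the $B_i$ \emph{pairwise disjoint} nonempty compact open bisections and put $a:=1_{B_0^{-1}}b=a_01_{s(B_0)}+\sum_{i=1}^n a_i1_{B_0^{-1}B_i}$. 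Disjointness does all the work at once: if some $B_0^{-1}B_i$ ($i\geq 1$) contained a unit $\gamma^{-1}\delta$ then $\delta=\gamma\in B_0\cap B_i=\emptyset$, so each $B_0^{-1}B_i$ lies in $\G\setminus\G^{(0)}$, these sets are pairwise disjoint, their ranges lie in $s(B_0)=:C$ automatically, and $a(u)=a_0\neq 0$ for $u\in C$, so $a\neq 0$ and minimality gives $A_K(\G)a=A_K(\G)b$. You list multiplication by $1_E$ for a general compact open bisection $E$ as an available move but never deploy it, and without it your iteration/bookkeeping for forcing all ranges into one set, and for collapsing the diagonal to a single scalar multiple of one characteristic function, is left unfinished rather than proved.

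Two further remarks. First, in the case where $b$ does have nonzero diagonal part, your rescaling idea can be made to work (left multiplication by $h=\sum_{a_i\neq 0}a_i^{-1}1_{D_i}$ sends $x\mapsto h(r(x))b(x)$, so the diagonal becomes $1_{C'}$ and every surviving off-diagonal point has range in $C'$), so that branch is salvageable; but it is strictly more laborious than the paper's one-line trick and still leaves the zero-diagonal case untouched. Second, your justification of the ``key reduction'' is wrong as stated: left multiplication by $c$ is not a map of left $A_K(\G)$-modules. The correct (and standard) argument, which is also the paper's, is that $A_K(\G)cb$ is a left ideal contained in $A_K(\G)b$, nonzero when $cb\neq 0$ because the algebra has local units, hence equal to $A_K(\G)b$ by minimality; the conclusion you want survives, but fix the mechanism.
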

\begin{proof}
     Write $b = \sum_{i=0}^n a_i1_{B_i}$ with $a_i \in K$,  $B_i$ non-empty compact open  bisections and 
 $B_i\cap B_j = \emptyset$ for every $i \neq j$. Then we have 
 \[a \coloneqq 1_{B_0^{-1}}b = a_01_{s(B_0)}+\sum_{i=1}^n a_i 1_{B_0^{-1}B_i}.\]
 Since $A_K(\G)a \subseteq A_K(\G)b$ and the latter is minimal, the result follows.  
\end{proof}

\begin{lemma}\label{lemma_sandwich}
    Let $\G$ be an ample Hausdorff groupoid and let $A_K(\G)a$ be a minimal left ideal of $A_K(\G)$ for some $a\in A_K(\G)$. Then there is a compact open set $C \subset \G^{(0)}$ and an element $b \in 1_C A_K(\G)1_C$ such that $A_K(\G)a \cong A_K(\G)b$ as left $A_K(\G)$-modules. 
\end{lemma}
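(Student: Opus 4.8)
The plan is to first put the generator of the minimal left ideal into the normal form supplied by Lemma~\ref{lem:new}, and then to enlarge the unit-space piece so that it also absorbs the sources of the off-diagonal bisections. So, starting from the minimal left ideal $A_K(\G)a$, I would apply Lemma~\ref{lem:new} to obtain $a' \in A_K(\G)$ with $A_K(\G)a \cong A_K(\G)a'$ as left $A_K(\G)$-modules and with
\[
a' = a_0 1_C + \sum_{i=1}^n a_i 1_{B_i},
\]
where $C \subset \G^{(0)}$ is compact open, the $B_i \subset \G \setminus \G^{(0)}$ are pairwise disjoint compact open bisections, and $r(B_i) \subseteq C$ for every $i$. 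It then suffices to produce a compact open $\widetilde C$ with $a' \in 1_{\widetilde C} A_K(\G) 1_{\widetilde C}$, since we may then take $C := \widetilde C$ and $b := a'$.

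The natural choice is $\widetilde C := C \cup \bigcup_{i=1}^n s(B_i)$. Each $s(B_i)$ is a compact open subset of $\G^{(0)}$ (the source map restricted to a compact open bisection is a homeomorphism onto an open subset of $\G^{(0)}$, and $B_i$ is compact), hence $\widetilde C$ is compact open as a finite union of such sets. Next I would verify termwise that $1_{\widetilde C}\, a'\, 1_{\widetilde C} = a'$, using the identity $1_B 1_D = 1_{BD}$ for compact open bisections together with the description of products of bisections in Lemma~\ref{deS} (so that for compact open $E \subset \G^{(0)}$ and a compact open bisection $B$ one has $1_E 1_B = 1_{B \cap r^{-1}(E)}$ and $1_B 1_E = 1_{B \cap s^{-1}(E)}$, and $1_E 1_F = 1_{E \cap F}$ for $E, F \subset \G^{(0)}$ by Remark~\ref{Donsimon}). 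Indeed $1_{\widetilde C} 1_C = 1_C 1_{\widetilde C} = 1_C$ because $C \subseteq \widetilde C$; $1_{\widetilde C} 1_{B_i} = 1_{B_i}$ because $r(B_i) \subseteq C \subseteq \widetilde C$; and $1_{B_i} 1_{\widetilde C} = 1_{B_i}$ because $s(B_i) \subseteq \widetilde C$ by construction. Summing, $1_{\widetilde C} a' 1_{\widetilde C} = a'$, i.e. $a' \in 1_{\widetilde C} A_K(\G) 1_{\widetilde C}$, and then $A_K(\G)a \cong A_K(\G)a' = A_K(\G)b$ with $b := a' \in 1_{\widetilde C} A_K(\G) 1_{\widetilde C}$, as required.

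I do not expect a genuine obstacle here: the structural content is already in Lemma~\ref{lem:new}, and what remains is bookkeeping. The only points deserving a line of care are that $\widetilde C$ is genuinely compact open and that the condition $r(B_i) \subseteq C$ delivered by Lemma~\ref{lem:new} is exactly what keeps left multiplication by $1_{\widetilde C}$ from altering $a'$ after the enlargement. (Alternatively, one can bypass Lemma~\ref{lem:new} altogether: write $a = \sum_i \lambda_i 1_{B_i}$ with the $B_i$ pairwise disjoint and contained in $\Supp a$, and take $C := r(\Supp a) \cup s(\Supp a)$, which is compact open because $r$ and $s$ are open maps on an ample groupoid; the same termwise computation then gives $1_C a 1_C = a$ directly, so $b := a$ works.)
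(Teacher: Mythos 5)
Your argument is correct, and it finishes differently from the paper. The paper keeps the set $C$ produced by Lemma~\ref{lem:new} and cuts the element down: it sets $b:=1_Ca1_C$, notes $b\neq 0$ (because the unit-space term $a_01_C$ survives), and then shows the right-multiplication map $R_{1_C}\colon A_K(\G)1_Ca\to A_K(\G)b$, $f\mapsto f1_C$, is a module isomorphism, using minimality a second time to get injectivity. You instead keep the element and enlarge the set: with $\widetilde C:=C\cup\bigcup_i s(B_i)$ (compact open, since $s$ is an open map and each $s(B_i)$ is compact) the identities $1_{\widetilde C}1_C=1_C$, $1_{\widetilde C}1_{B_i}=1_{B_i}$ (as $r(B_i)\subseteq C\subseteq\widetilde C$) and $1_{B_i}1_{\widetilde C}=1_{B_i}$ give $1_{\widetilde C}a'1_{\widetilde C}=a'$, so $b:=a'$ already lies in the corner and the only isomorphism needed is the one from Lemma~\ref{lem:new}. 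This buys a cleaner finish: no second appeal to minimality, no need to check $1_Ca1_C\neq 0$ (which in the paper tacitly uses $a_0\neq 0$), at the cost of not producing a $b$ of the sandwiched form $1_Ca1_C$ over the specific $C=s(B_0)$. Your parenthetical alternative is also valid and in fact shows the statement as written is just the local-units property: for any $a\in A_K(\G)$, taking $C=r(\Supp a)\cup s(\Supp a)$ gives $1_Ca1_C=a$, so minimality is not needed at all for the bare statement; the paper's route matters only insofar as one wants $b$ tied to the normal form of Lemma~\ref{lem:new}.
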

\begin{proof}
By Lemma~\ref{lem:new}, we can assume that $a$ has the form 
 \[a = a_01_C+\sum_{i=1}^na_i1_{B_i}\]
 with $C \subset \G^{(0)}$, $B_i \subset \G\setminus \G^{(0)}$, $B_i\cap B_j = \emptyset$ for every $i \neq j$ and $r(B_i) \subseteq C$.
Thus $1_Ca = a$ and  
\[1_Ca1_C= a_01_C + \sum_{i=1}^na_1 1_{B_iC} \neq 0.
\]
If we take $b = 1_Ca1_C \in 1_CA_K(\G)1_C$, we have that the map \[
R_{1_C}\colon A_K(\G)1_Ca \to A_K(\G)b 
\quad
\text{such that} \quad R_{1_C}(f) = f1_C
\] is a module isomorphism: it is injective by the minimality of $A_K(\G)1_Ca$ and surjective by construction. 
\end{proof}

Under the conditions of the previous lemma, we next show that minimal left (resp. right) ideals ensure the existence of open singletons in $\G^{(0)}$.

\begin{proposition}\label{prop_existence_singleton}
 Let  $\G$ be an ample Hausdorff groupoid and let $I = A_K(\G)a$ be a minimal left ideal for some $a \in A_K(\G)$ as in Lemma~\ref{lem:new}. Then there is an element $u \in \G^{(0)}$ such that $\{u\}$ is open and  $u\in\Supp(a)$.
\end{proposition}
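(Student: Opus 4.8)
The plan is to show that, for $a$ in the form given by Lemma~\ref{lem:new} (so $a = a_0 1_C + \sum_{i=1}^n a_i 1_{B_i}$, and with $a_0\neq 0$, $C\neq\emptyset$ — this is automatic from the construction $a=1_{B_0^{-1}}b$ in the proof of that lemma, starting from a reduced expression for $b$, since the diagonal part of $a$ is exactly $a_0 1_{s(B_0)}$), the set $C$ is forced to be \emph{finite}. Once $C$ is known to be finite we are done: as $B_i\cap\G^{(0)}=\emptyset$ we get $a(u)=a_0\neq 0$ for every $u\in C$, so $C\subseteq\Supp(a)$; and a finite subset of the Hausdorff space $\G^{(0)}$ is discrete, so each $u\in C$ is relatively open in $C$, hence (as $C$ is open in $\G^{(0)}$) $\{u\}$ is open in $\G^{(0)}$, and $u\in C\subseteq\Supp(a)$ is as required.

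First I would record the elementary consequences $1_C a = a$ (because $r(B_i)\subseteq C$) and $\Supp(a)\cap\G^{(0)} = C$. Then I would fix an arbitrary $u_0\in C$ and introduce the set
\[
P_0 \;:=\; \{u_0\}\ \cup\ \{\, r(\gamma)\ :\ \gamma\in\textstyle\bigcup_{i=1}^n B_i,\ s(\gamma)=u_0\,\}\ \subseteq\ C ,
\]
which is \emph{finite}, because each compact open bisection $B_i$ contains at most one element with source $u_0$.

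The main step — and the part I expect to be the real obstacle to get exactly right — is the claim: \emph{for every compact open $D\subseteq\G^{(0)}$ with $1_D a\neq 0$ one has $D\cap P_0\neq\emptyset$.} To prove it I would use minimality of $I$: since $0\neq 1_D a\in I$ we have $A_K(\G)(1_D a) = I\ni a$, so $a = g\,1_D\,a$ for some $g\in A_K(\G)$; setting $h:=1_C g 1_D$ and using $1_C a = a$ gives $a = ha$, where now $\Supp(h)\subseteq r^{-1}(C)\cap s^{-1}(D)$. Evaluating $a = ha$ at the unit $u_0$ yields
\[
a_0 \;=\; a(u_0) \;=\; \sum_{\mu\nu=u_0} h(\mu)\,a(\nu)\;\neq\;0 ,
\]
so some composable pair $\mu\nu=u_0$ has $\mu\in\Supp(h)$ and $\nu\in\Supp(a)$; since $u_0$ is a unit this forces $\nu=\mu^{-1}$, hence $s(\nu)=u_0$ and $r(\nu)=s(\mu)\in D$. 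Finally, the only elements of $\Supp(a)$ with source $u_0$ are $u_0$ itself (with range $u_0$) and, for each $i$ with $u_0\in s(B_i)$, the unique element of $B_i$ with source $u_0$; in every case $r(\nu)\in P_0$. Thus $r(\nu)\in D\cap P_0$, which proves the claim.

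To finish I would argue by contradiction: if some $q\in C\setminus P_0$, then — $\G^{(0)}$ being a locally compact Boolean space (Remark~\ref{Donsimon}) and $P_0$ finite — there is a compact open $D$ with $q\in D\subseteq C$ and $D\cap P_0=\emptyset$; but $q\in D$ and $a(q)=a_0\neq 0$ give $(1_D a)(q)=a_0\neq 0$, so $1_D a\neq 0$, contradicting the claim. Hence $C\subseteq P_0$ is finite, and the conclusion follows as in the first paragraph. The only genuinely delicate point is the claim: the idea is that testing the minimality-relation $a=ha$ against a \emph{unit} collapses the source-fibre of $\Supp(a)$ over $u_0$ into the finite set $P_0$, and then ``every compact open set meeting $C$ meets $P_0$'' forces $C$ itself to be finite; the remaining ingredients (existence of local units, $1_C a = a$, total disconnectedness of $\G^{(0)}$) are routine.
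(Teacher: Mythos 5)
Your proof is correct, and it reaches the conclusion by a somewhat different route than the paper, although the engine is the same: minimality lets you write $a = g\,1_D\,a$ for any compact open $D\subseteq\G^{(0)}$ with $1_Da\neq 0$, and evaluating this identity at the unit $u_0$ produces an element of $\Supp(a)$ with source $u_0$ and range in $D$; since each $B_i$ is a bisection, that element lies in the source-fibre of $\Supp(a)$ over $u_0$, which has at most $n+1$ elements. The paper exploits this by contradiction and iteration: assuming $\{u\}$ is not open, it takes a sequence of distinct units converging to $u$, separates them from $u$ by nested compact open sets $V_k,U_k$, and applies the mechanism repeatedly to manufacture infinitely many distinct elements of $\Supp(a)$ with source $u$, contradicting the finiteness of the fibre. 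You instead prove the sharper structural statement that the diagonal part $C$ of $\Supp(a)$ is contained in the finite set $P_0$ of ranges of that fibre (a compact open $D$ with $q\in D\subseteq C\setminus P_0$ would satisfy $(1_Da)(q)=a_0\neq 0$ yet miss $P_0$, violating your claim), so $C$ is finite, and being open in the Hausdorff space $\G^{(0)}$ it consists of open singletons, all contained in $\Supp(a)$. Your version buys a stronger conclusion (every point of $C$ is isolated, not just one unit) and avoids the sequential step entirely; the paper's passage from ``$\{u\}$ not open'' to the existence of a convergent sequence of distinct units tacitly uses a countability property of $\G^{(0)}$ not in the hypotheses (repairable by choosing one separating neighbourhood at a time, but your formulation sidesteps it). The paper's argument, in exchange, stays localised at a single unit and never needs to discuss all of $C$. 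Finally, your reading of Lemma~\ref{lem:new} --- that one may assume $a_0\neq 0$ and $C\neq\emptyset$ from the construction $a=1_{B_0^{-1}}b$ --- is exactly the reading the paper itself makes in its proof.
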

\begin{proof}
Since $I$ is a minimal left ideal we have that $I =A_K(\G)a$ with $a\in I$ and $a = a_01_C+ \sum_{i=1}^la_i 1_{B_i}$ with $C \subset \G^{(0)}$, $B_i \subset \G \setminus \G^{(0)}$ and $a_0 \neq 0$ as in Lemma~\ref{lem:new}. Since $C$ is nonempty, there is a unit $u \in C \subset \supp(a)$. We show that $\{u\}$ is open. 

By way of contradiction, suppose that $\{u\}$ is not open, then $\G^{(0)} \setminus \{u\}$ is not closed. Since $\G$ is Hausdorff,  $\G^{(0)}$ is closed in $\G$ and so there must be a sequence $(x_n)_{n \geq 1} \subset \G^{(0)}\setminus \{u\}$ such that $x_n \to u$ and $x_i \neq x_j$ for every $i \neq j$.  Since $u \in \supp(a) \cap \G^{(0)}$, which is open, and $x_n \to u$ we can assume that $(x_n)_{n\geq 1} \subset \G^{(0)}\cap \supp(a) \setminus \{u\}$. Consider $x_1 \neq u$.  Since $\G$ is Hausdorff there exist compact open sets $V_1$ and $U_1$ in $\G^{(0)}$ such that $x_1 \in V_1 \subset \G^{(0)}\cap \supp(a)$, $u \in U_1\subset \G^{(0)}\cap \supp(a) $ and $V_1 \cap U_1 = \emptyset$. Then, we have that $1_{V_1}a \neq 0$ and $1_{U_1}a \neq 0$. Since the ideal $I$ is minimal, this implies that $I = A_K(\G)1_{V_1}a = A_K(\G)1_{U_1}a$. In particular, there is a function $g \in A_K(\G)$ such that $g1_{V_1}a = 1_{U_1}a$. We evaluate both sides at $u$.  For the left-hand side,  we have that $1_{U_1}a(u) = a(u) \neq 0$.  For the right-hand side, we have 
\[
g1_{V_1}a(u) = \sum_{u = \gamma \beta \delta} g(\gamma)1_{V_1}(\beta) a(\delta) \neq 0.\]
Since the sum is nonzero, there is at least one $\gamma,\beta, \delta \in G$ with $u = \gamma \beta \delta$ and $g(\gamma)1_{V_1}(\beta)a(\delta) \neq 0$. Thus $\beta \in V_1 \subset \G\0 \setminus U_1$ and we have that $\alpha_1:= \delta$ and $\alpha^{-1}:= \gamma$. In summary, we have found an element $\alpha_1 \in \G$ such that $s(\alpha_1) = u$, $r(\alpha_1) \in V_1$ and $g(\alpha_1^{-1})a(\alpha_1) \neq 0$, that is, $\alpha_1 \in \supp(a)$. In addition, there is an element $n_1$ such that $x_n \in U_1$ for every $n \geq n_1$. 

By repeating this process,  we find compact open sets  $V_2,U_2 \subset \G^{(0)}$ such that $x_{n_1} \in V_2$, $u \in U_2$, $V_2 \cap U_2 = \emptyset$ and $U_2 \cup V_2 \subset U_1$. Again, by the minimality of the ideal $I$ this implies that there is some $\alpha_2 \in \G$ such that $s(\alpha_2) = u$ and $r(\alpha_2) \in V_2$ and $a(\alpha_2) \neq 0$. If we iterate this process infinitely many times, we can construct a sequence $(\alpha_n)_{n\geq 1} \subseteq \supp(a)$ with $\alpha_n \neq \alpha_m$ for $n \neq m$, $s(\alpha_n) = u$ and $r(\alpha_n) \in V_n$ for every $n \in V_n$. However, $a = a_01_C + \sum_{i= 1}^la_i1_{B_i}$ and its support contains at most $l+1$ different elements with source $u$, which is a contradiction.
\end{proof}

\begin{corollary}\label{cor_minimalsingleton}
If $\G$ is an ample Hausdorff groupoid and $I$ is a minimal left ideal of $A_K(\G)$, then there is a compact open singleton $\{x\} \subset \G^{(0)}$ such that $I \cong A_K(\G)a$ as left $A_K(\G)$-modules for some $a \in 1_{\{x\}}A_K(\G)1_{\{x\}}$.
\end{corollary}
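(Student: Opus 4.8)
The plan is to chain the three results of this section. Let $I$ be a minimal left ideal of $A_K(\G)$. Since $A_K(\G)$ has local units, $I = A_K(\G)b_0$ for some $b_0$, and Lemma~\ref{lem:new} lets us replace $b_0$ by an element $a = a_01_C + \sum_{i=1}^n a_i1_{B_i}$ in the normalized form there (with $a_0\neq 0$, $C\subseteq\G^{(0)}$, $B_i\subseteq\G\setminus\G^{(0)}$, the $B_i$ pairwise disjoint and $r(B_i)\subseteq C$), so that $I\cong A_K(\G)a$ as left $A_K(\G)$-modules; note that $A_K(\G)a$ is then again a minimal left ideal. Proposition~\ref{prop_existence_singleton} applies to this presentation and yields $u\in\G^{(0)}$ with $\{u\}$ open and $u\in\Supp(a)$, so $a(u)\neq 0$. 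Put $x:=u$: a singleton is compact, $\{x\}$ is open by construction and is a bisection contained in $\G^{(0)}$, so $1_{\{x\}}\in A_K(\G)$.

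Next I would rerun the ``sandwich'' argument of Lemma~\ref{lemma_sandwich}, but localizing at the single point $\{x\}$ instead of at a compact open set. Set $b := 1_{\{x\}}a1_{\{x\}}\in 1_{\{x\}}A_K(\G)1_{\{x\}}$. Evaluating the convolution at $x$ and using that the only factorization $x=\gamma\beta\delta$ in $\G$ with $\gamma,\delta\in\{x\}$ is the trivial one $\gamma=\beta=\delta=x$, one gets $b(x) = a(x)\neq 0$, so $b\neq 0$; the same computation gives $1_{\{x\}}a\neq 0$. Since $A_K(\G)1_{\{x\}}a$ is a nonzero submodule of the minimal module $A_K(\G)a$, it equals $A_K(\G)a$. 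Finally, the right-multiplication map $R_{1_{\{x\}}}\colon A_K(\G)1_{\{x\}}a \to A_K(\G)b$, $f\mapsto f1_{\{x\}}$, is a homomorphism of left $A_K(\G)$-modules which is surjective by construction; its kernel is a submodule of the minimal module $A_K(\G)a$, and it is proper because $1_{\{x\}}a = 1_{\{x\}}(1_{\{x\}}a)$ lies in the domain while $R_{1_{\{x\}}}(1_{\{x\}}a) = b\neq 0$. Hence $R_{1_{\{x\}}}$ is an isomorphism and $I\cong A_K(\G)a\cong A_K(\G)b$ with $b\in 1_{\{x\}}A_K(\G)1_{\{x\}}$, as required.

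The argument is essentially bookkeeping once Proposition~\ref{prop_existence_singleton} and the two lemmas are in hand; the one point that needs care is the claim $b = 1_{\{x\}}a1_{\{x\}}\neq 0$, i.e.\ that cutting $a$ down to the single point $x$ does not annihilate it. This is exactly where it matters that the open singleton furnished by Proposition~\ref{prop_existence_singleton} lies in the \emph{support} of $a$ (and not merely somewhere in $C$), together with the fact that $x$ is a unit, so that $1_{\{x\}}a1_{\{x\}}$ evaluated at $x$ collapses to $a(x)$. If one preferred, one could instead first obtain $b\in 1_CA_K(\G)1_C$ from Lemma~\ref{lemma_sandwich} and then verify $1_{\{x\}}b1_{\{x\}}\neq 0$ using $a_0\neq 0$ and $u\in C$, but localizing $a$ directly at $\{x\}$ is cleaner.
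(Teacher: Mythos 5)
Your proof is correct and follows essentially the same route as the paper: normalize the generator via Lemma~\ref{lem:new}, invoke Proposition~\ref{prop_existence_singleton} to get an open singleton $\{x\}$ in the support, compress to $1_{\{x\}}a1_{\{x\}}$, and use minimality to conclude. The only difference is that you spell out the isomorphism $A_K(\G)1_{\{x\}}a\cong A_K(\G)1_{\{x\}}a1_{\{x\}}$ via the right-multiplication map (as in Lemma~\ref{lemma_sandwich}) and verify $1_{\{x\}}a1_{\{x\}}\neq 0$ by evaluating at $x$, details the paper's proof leaves implicit.
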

\begin{proof}
    Consider $I = A_K(\G)b$ a minimal left ideal where $b = \sum_{i=1}^n r_i 1_{B_i} \neq 0$ with $r_i \in K$, $B_i$ compact open bisection such that $B_i \cap B_j = \emptyset$. By Proposition \ref{prop_existence_singleton} there is a unit $x \in \supp(b)$ such that $\{x\}$ is open. Thus, $a\coloneqq 1_{\{x\}}b1_{\{x\}} \neq 0$ since $a = \sum_{i=1}^n r_i 1_{xB_ix}$ with $xB_ix \cap x B_j x = \emptyset$. Since $I$ is minimal, we have 
    \[
    I= A_K(\G)b = A_K(\G)1_{\{x\}}b \cong A_K(\G)1_{\{x\}}b1_{\{x\}} =A_K(\G)a.
    \]
\end{proof}
So far we have shown that, on the one hand, \ref{prop_existence_singleton} implies that the existence of minimal left ideals is related to the existence of open  singletons. On the other hand, Theorem \ref{Alhaurin} implies that under certain conditions of the isotropy group of such singletons, we can form minimal left ideals. However, not every open singleton implies the existence of a minimal left ideal. As an example, we will see that if $\{x\}\subset \G\0$ is a open and such that $x\G x \cong \ZZ^k$, then $A_K(\G)1_{\{x\}}$ is not a minimal left ideal. This result and its corresponding proof, is a translation of  \cite[Proposition 2.5]{pino2008socle} into the Steinberg algebra context. This is a consquence of Corollary~\ref{cor:Zk} below but we first establish some preliminary results.
\begin{lemma}\label{nomaslabels}
    If $\G$ is an ample Hausdorff groupoid and $\{x\}$ is open in $\G^{(0)}$ with $x\G x \cong \mathbb{Z}^k$, then $1_{\{x\}}A_K(\G)1_{\{x\}} \cong K[y_1, \dots, y_k,y^{-1}_1, \dots, y^{-1}_k]$ as $K$-algebras.
\end{lemma}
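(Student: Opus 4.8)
The goal is to identify the corner algebra $1_{\{x\}}A_K(\G)1_{\{x\}}$ with the Laurent polynomial ring in $k$ variables. The plan is to exploit the fact, already established implicitly in the discussion around Theorem~\ref{Alhaurin}, that elements of $1_{\{x\}}A_K(\G)1_{\{x\}}$ are supported on the isotropy group $x\G x$. First I would observe that since $\{x\}$ is compact open, for every $\alpha \in x\G x$ the singleton $\{\alpha\}$ is a compact open bisection, so $1_{\{\alpha\}} \in A_K(\G)$. Conversely, given any $g \in 1_{\{x\}}A_K(\G)1_{\{x\}}$, writing $g = \sum_l b_l 1_{B_l}$ with $B_l$ compact open bisections and applying $1_{\{x\}}$ on both sides, we get $g = 1_{\{x\}} g 1_{\{x\}} = \sum_l b_l 1_{\{x\}B_l\{x\}}$, and each $\{x\}B_l\{x\}$ is either empty or a singleton $\{\alpha\}$ with $\alpha \in x\G x$ (using $r|_{B_l}, s|_{B_l}$ injective). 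Hence $g = \sum_{\alpha \in F} c_\alpha 1_{\{\alpha\}}$ for a finite subset $F \subseteq x\G x$ and scalars $c_\alpha \in K$; moreover this representation is unique because the $\{\alpha\}$ are pairwise disjoint. This shows $1_{\{x\}}A_K(\G)1_{\{x\}}$ is, as a $K$-module, free on the basis $\{1_{\{\alpha\}} : \alpha \in x\G x\}$.

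Next I would pin down the multiplication. For $\alpha,\beta \in x\G x$ we have $s(\alpha) = r(\beta) = x$, so $(\alpha,\beta)$ is composable and $1_{\{\alpha\}}1_{\{\beta\}} = 1_{\{\alpha\}\{\beta\}} = 1_{\{\alpha\beta\}}$, with $\alpha\beta \in x\G x$ again. Thus the $K$-linear map sending the group element $\alpha \in x\G x$ to $1_{\{\alpha\}}$ extends to an isomorphism of $K$-algebras from the group algebra $K[x\G x]$ onto $1_{\{x\}}A_K(\G)1_{\{x\}}$ (the unit $x$ maps to $1_{\{x\}}$, which is the identity of the corner). Finally, since $x\G x \cong \mathbb{Z}^k$ as groups, we have $K[x\G x] \cong K[\mathbb{Z}^k] \cong K[y_1,\dots,y_k,y_1^{-1},\dots,y_k^{-1}]$, the last isomorphism being the standard identification of the group algebra of the free abelian group of rank $k$ with the Laurent polynomial ring, sending the $i$-th standard generator of $\mathbb{Z}^k$ to $y_i$. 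Composing these gives the claimed $K$-algebra isomorphism.

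I do not expect a serious obstacle here: the lemma is essentially bookkeeping. The one point requiring a little care is the claim that an element of the corner is supported entirely inside $x\G x$ — i.e. that $\{x\}B\{x\}$ is either empty or a single isotropy element for any compact open bisection $B$ — which follows from Remark~\ref{Donsimon} (for the unit-space part) together with the bisection property of $B$ and Lemma~\ref{deS}. The other mild point is verifying that the group-algebra isomorphism $K[\mathbb{Z}^k] \cong K[y_1,\dots,y_k,y_1^{\pm 1},\dots]$ is exactly the standard one, which is classical and can simply be cited. Everything else is a direct computation with characteristic functions using $1_B 1_D = 1_{BD}$.
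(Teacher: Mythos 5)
Your proof is correct and takes essentially the same route as the paper: the paper's own proof simply chooses generators $\gamma_1,\dots,\gamma_k$ of $x\G x\cong\mathbb{Z}^k$ and defines the isomorphism by $\prod_i 1_{\{\gamma_i^{e_i}\}}\mapsto \prod_i y_i^{e_i}$, implicitly using exactly the facts you spell out (that the corner has $K$-basis $\{1_{\{\alpha\}}\colon \alpha\in x\G x\}$ and that $1_{\{\alpha\}}1_{\{\beta\}}=1_{\{\alpha\beta\}}$). Your intermediate identification of the corner with the group algebra $K[x\G x]$ is the same observation the paper records separately in Remark~\ref{lemma_corner_steinberg}.
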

\begin{proof}
    If $x\G x\cong\Z^k$, then there are elements $\g_i\in x\G x$ ($i=1,\ldots,k)$ such that any $a\in x\G x$ is of the form $a=\g_1^{e_1}\cdots \g_k^{e_k}$ for some integers $e_i$. It is then enough to consider the $K$-algebra isomorphism $\phi \colon 1_{\{x\}}A_K(\G)1_{\{x\}} \to  K[y_1,\ldots,y_k,y_1^{-1},\ldots,y_k^{-1}]$ with $\phi\left (\prod_i1_{\{\g_i^{e_i}\}} \right ) = \prod_i y_i^{e_i}$.
\end{proof}

Before we state out main theorem, which puts together the results we have established so far, we spend a little time considering some general facts about minimal ideals. 

\begin{proposition}\label{general} Let $R$ be a ring with local units, $e\in R$ a nonzero idempotent and $a\in eRe$. Then if $Ra$ is minimal left ideal of $R$, the left ideal $eRa$ of $eRe$ is also minimal. \end{proposition}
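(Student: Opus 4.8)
The plan is to check directly that $eRa$ is a nonzero left ideal of $eRe$, and then that it contains no nonzero proper $eRe$-submodule. The observation that drives everything is that $a\in eRe$ forces $a=ea=ae$. From $a=ae$ we get, for every $r\in R$, that $era=e(ra)e$, so $eRa\subseteq eRe$; and $(eRe)(eRa)=eReRa\subseteq eRa$ shows $eRa$ is a left ideal of $eRe$. It is nonzero: $Ra$ being minimal forces $a\neq 0$, and $a\in eRa$ because, choosing (by the local units hypothesis) an idempotent $u\in R$ with $ua=a$, we have $eua=ea=a$.

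For minimality, I would start from an arbitrary nonzero $b\in eRa$ and aim to prove $(eRe)b=eRa$. First note that $eb=b$, since every element of $eRa$ is fixed on the left by $e$, and that $b\in Rb$ (again using a local unit for $b$), so $Rb$ is a nonzero left ideal of $R$ contained in the minimal left ideal $Ra$; hence $Rb=Ra$ and in particular $a\in Rb$, say $a=rb$ with $r\in R$. Now insert idempotents: $a=ea=e(rb)=(er)b=(er)(eb)=(ere)b$, so $a\in (eRe)b$. On the other hand, since $a=ea$ we have $era=er(ea)=(ere)a$ for all $r$, that is, $eRa=(eRe)a$. Combining the two inclusions, $eRa=(eRe)a\subseteq (eRe)\big((eRe)b\big)\subseteq (eRe)b\subseteq eRa$, which forces $(eRe)b=eRa$. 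Hence $eRa$ is a minimal left ideal of $eRe$.

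I do not expect a genuine obstacle here; the argument is elementary and the only care needed is the bookkeeping with $e$ and the local units. The two small facts that make it work are: $a=ea$ (and $a=ae$), which lets one both replace $eRa$ by $(eRe)a$ and rewrite $erb=(ere)b$ once we know $b=eb$; and the fact that in a ring with local units a nonzero element $b$ lies in $Rb$, so that $Rb$ is genuinely a nonzero sub-left-ideal to which minimality of $Ra$ applies. This proposition is also an instance of the standard fact that the corner functor $M\mapsto eM$ sends a simple $R$-module either to zero or to a simple $eRe$-module, but the computation above is the quickest self-contained route.
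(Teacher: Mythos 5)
Your proof is correct and follows essentially the same route as the paper: use local units to see that anything nonzero inside $eRa$ generates a nonzero left $R$-ideal contained in $Ra$, invoke minimality of $Ra$ to recover $a$, and then multiply by $e$ to pull the conclusion back into $eRe$. The only difference is cosmetic: you argue elementwise with a nonzero $b\in eRa$ and the cyclic ideal $(eRe)b$, while the paper runs the identical computation with an arbitrary nonzero left ideal $J\subseteq eRa$ of $eRe$.
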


\begin{proof} We have $eRa\ne 0$ since on the contrary $a=eea\in eRa=0$ implying $Ra=0$, which contradicts minimality. 
Next we prove that $eRa$ is minimal in $eRe$.
Let $0\ne J\subset eRa$ be a left ideal of $eRe$. Notice that $J=eJ$. 
Since $R$ has local units, $RJ\ne 0$ and $RJ\subset Ra$ and hence by minimality $RJ=Ra$. Thus
$a\in RJ$ again using that $R$ has local units. So $a=ea\in eRJ=eReJ\subset J$ and consequently 
$eRa=eRea\subset eRe J\subset J$. So $J=eRa$ proving the minimality of $eRa$.
\end{proof}

Now we take advantage that in general, the socle of a field is the field itself and the socle of a domain that is
not a field is $0$.

\begin{corollary}
Under the hypothesis of the previous proposition, if $eRe$ is a domain but not a field, $Ra$ is not a 
minimal left ideal.
\end{corollary}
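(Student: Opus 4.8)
The plan is to argue by contradiction, feeding the assumption into Proposition~\ref{general} and then invoking the elementary fact — recalled in the paragraph preceding the statement — that a domain which is not a field has trivial socle. So I would suppose, towards a contradiction, that $Ra$ is a minimal left ideal of $R$. Since $e$ is a nonzero idempotent and $a\in eRe$, Proposition~\ref{general} applies and shows that $eRa$ is a minimal left ideal of $S\coloneqq eRe$; in particular $eRa\neq 0$, so $\soc(S)\neq 0$. Note that $S$ is a ring with identity $e$, and that, because $a=ea=ae$, one has $eRa=(eRe)a=Sa$ with $a\neq 0$.

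It then remains to see that $S$ cannot be a domain that is not a field, and here I would give the short direct computation. Since $a\in S$ and $S$ has no zero divisors, $0\neq a^2\in Sa$, and $Sa^2$ is a nonzero left ideal of $S$ contained in $Sa$; by minimality of $Sa$ we get $Sa^2=Sa$. Hence $a=ca^2$ for some $c\in S$, so $(e-ca)a=0$, and as $S$ is a domain and $a\neq 0$ this forces $ca=e$. Therefore $Sa=S$, i.e.\ $S$ is a minimal left ideal of itself, so $S$ has no nonzero proper left ideals; then for every $0\neq x\in S$ we have $Sx=S$, which gives $x$ a left inverse, and the same trick inside the domain $S$ promotes it to a two-sided inverse. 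Thus $S=eRe$ is a (commutative) division ring, that is, a field, contradicting the hypothesis. This contradiction shows $Ra$ is not minimal.

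The argument is essentially formal: the corollary is a one-line deduction from Proposition~\ref{general} once one knows that a domain with nonzero socle is a field, which is exactly the fact flagged just before the statement. So the only place where a little care is needed is in recording a self-contained proof of that socle fact, namely the computation above, where the absence of zero divisors is used twice (to pass from $a=ca^2$ to $ca=e$, and then to upgrade a one-sided inverse to a two-sided one); I do not expect any genuine obstacle beyond that.
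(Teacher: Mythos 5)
Your proof is correct and follows essentially the same route as the paper: assume $Ra$ is minimal, apply Proposition~\ref{general} to conclude $\soc(eRe)\neq 0$, and contradict the fact that a domain which is not a field has zero socle. The only difference is that you supply a self-contained verification of that last fact (via $Sa^2=Sa$ and cancellation), which the paper simply records as known in the paragraph preceding the corollary.
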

\begin{proof} If $Ra$ were minimal then $eRa$ would be a minimal left ideal of $eRe$ hence $\soc(eRe)\ne 0$,
a contradiction.  
\end{proof}

From these facts, we deduce the following:
 Let $\G$ be an ample Hausdorff groupoid. If  $\{x\}$ is open in $\G^{(0)}$ and $x\G x \cong \mathbb{Z}^k$, then there are no elements $a \in 1_{\{x\}}A_K(\G)1_{\{x\}}$ 
that generate a minimal left ideal.
Taking into account that  we have shown that $1_{\{x\}}A_K(\G)1_{\{x\}}$ is a domain but not a field in Lemma \ref{nomaslabels}, we obtain the next corollary. 
\begin{corollary}\label{cor:Zk}
     Let $\G$ be an ample Hausdorff groupoid and let $\{x\}$ be open in $\G^{(0)}$ with $x\G x \cong \mathbb{Z}^k$.  Then there are no  elements $a \in 1_{\{x\}}A_K(\G)1_{\{x\}}$ that generate a minimal left ideal. 
\end{corollary}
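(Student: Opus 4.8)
The plan is to reduce the statement immediately to the corner algebra $1_{\{x\}}A_K(\G)1_{\{x\}}$ and then quote the ring-theoretic facts already established. First I would observe that a singleton is automatically compact, so the hypothesis that $\{x\}$ is open in $\G^{(0)}$ makes $\{x\}$ a compact open subset of the unit space; hence $e := 1_{\{x\}}$ is a well-defined nonzero idempotent of $R := A_K(\G)$ and $eRe = 1_{\{x\}}A_K(\G)1_{\{x\}}$.

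Next, by Lemma~\ref{nomaslabels} there is a $K$-algebra isomorphism $eRe \cong K[y_1,\dots,y_k,y_1^{-1},\dots,y_k^{-1}]$. The Laurent polynomial ring in $k\ge 1$ indeterminates is an integral domain which is not a field (for example $1+y_1$ has no inverse), so $eRe$ is a domain but not a field.

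Finally I would apply the Corollary immediately following Proposition~\ref{general} with this choice of $R$, $e$: for any $a\in eRe$, if $Ra = A_K(\G)a$ were a minimal left ideal of $R$, then $eRa$ would be a minimal left ideal of the corner $eRe$ by Proposition~\ref{general}, forcing $\soc(eRe)\neq 0$, which contradicts the fact that a domain that is not a field has zero socle. Therefore no $a\in 1_{\{x\}}A_K(\G)1_{\{x\}}$ generates a minimal left ideal, which is exactly the assertion.

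The only points requiring a word of care are the first one --- confirming that $1_{\{x\}}$ really is a nonzero idempotent of $A_K(\G)$, using that $\{x\}$ is compact open --- and the implicit assumption $k\ge 1$: when $k=0$ the corner is just $K$ and in fact $A_K(\G)1_{\{x\}}$ is minimal by Corollary~\ref{cor_1xminimal}, so the statement is understood for $k\ge 1$. There is no substantial obstacle; all the real work has already been done in Lemma~\ref{nomaslabels} and Proposition~\ref{general}, and this corollary is simply their combination.
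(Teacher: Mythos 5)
Your proposal is correct and follows essentially the same route as the paper: invoke Lemma~\ref{nomaslabels} to identify the corner $1_{\{x\}}A_K(\G)1_{\{x\}}$ with a Laurent polynomial ring (a domain that is not a field, hence with zero socle), and then apply Proposition~\ref{general} and its corollary to rule out minimal left ideals generated by elements of that corner. Your remark that the statement tacitly requires $k\ge 1$ is a sensible precision but does not change the argument.
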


\begin{example} \label{ex:kp}We apply our results to Kumjian-Pask algebras and generalise \cite[Theorem~3.9]{kpsoc} from row-finite $k$-graphs to finitely aligned $k$-graphs. See \cite{CP} for the details of the Steinberg algebra approach for these algebras.   Given a finitely aligned $k$-graph $\Lambda$, write $\G_{\Lambda}$ for the boundary path groupoid of $\Lambda$ and identify its unit space with $\delta \Lambda$.     We extend the definition of vertices that are \emph{line points} as follows:
\[
P_l(\Lambda)\coloneqq \{ v \in \Lambda^{0} : v\delta \Lambda = \{x\} \text{ and } x \text{ is aperiodic}\}
\]  Recall that $x \in \delta \Lambda$ is aperiodic if and only if the isotropy at $x$ is trivial; otherwise, it is a subgroup of $\mathbb{Z}^k$. 
So when $v \in P_l(\Lambda)$, there exists a unit $x$ such that 
$\{x\}=v\delta\Lambda=Z(v)$ is clopen.  
 \begin{corollary}\label{thm:kp}
 Let $\Lambda$ be a finitely aligned $k$-graph and let $K$ be a field. 
 \begin{enumerate}
     \item\label{it1:kp} Let $v \in \Lambda^0$. Then KP$_K(\Lambda)p_v$ is a minimal left ideal if and only if $v \in P_l(\Lambda)$.
     \item\label{it2:kp} Let $a \in$ KP$_K(\Lambda)$. Then KP$_K(\Lambda)a$ is a minimal left ideal if and only if there exists $v \in P_l(\Lambda)$ such that KP$_K(\Lambda)p_v$ is isomorphic to KP$_K(\Lambda)a$ as left KP$_K(\Lambda)$- modules.
 \end{enumerate}
 \end{corollary}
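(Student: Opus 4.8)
The plan is to derive Corollary~\ref{thm:kp} by specialising the general machinery to the boundary path groupoid $\G_\Lambda$. For part~\eqref{it1:kp}, recall that under the identification of the unit space with $\delta\Lambda$ one has $p_v = 1_{Z(v)}$, and $KP_K(\Lambda) \cong A_K(\G_\Lambda)$. First I would prove the ``if'' direction: if $v \in P_l(\Lambda)$, then by definition $v\delta\Lambda = \{x\}$ with $x$ aperiodic, so $Z(v) = \{x\}$ is clopen (hence compact open) and the isotropy at $x$ is trivial, i.e.\ $x\G_\Lambda x = \{x\}$. Corollary~\ref{cor_1xminimal} then immediately gives that $A_K(\G_\Lambda)1_{\{x\}} = KP_K(\Lambda)p_v$ is a minimal left ideal. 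For the ``only if'' direction, suppose $KP_K(\Lambda)p_v$ is minimal. Then $\{v\delta\Lambda\} = Z(v) = \supp(1_{Z(v)})$, and by Proposition~\ref{prop_existence_singleton} (applied with $a = p_v$, which already has the required form with $C = Z(v)$ and no $B_i$ terms) there is an open singleton $\{x\}$ with $x \in Z(v)$. But I must still show $Z(v) = \{x\}$ and that $x$ is aperiodic. Minimality of $A_K(\G_\Lambda)1_{Z(v)}$ forces that $1_{Z(v)}A_K(\G_\Lambda)1_{Z(v)}$ has no proper structure incompatible with a minimal module; concretely, if $Z(v)$ contained a point $y \neq x$, then $1_{\{y\}}$ (if $\{y\}$ open) or a compact open neighbourhood disjoint from $x$ would generate a proper subideal, contradicting minimality — the cleaner route is: minimality of $A_K(\G_\Lambda)1_{Z(v)}$ implies $1_{Z(v)}$ cannot be written as an orthogonal sum of two nonzero idempotents coming from disjoint compact open subsets of $Z(v)$, so $Z(v)$ must be a single point; then that point is $x$, it is open, and by Corollary~\ref{cor:Zk} the isotropy $x\G_\Lambda x$ cannot be $\cong \ZZ^k$ for $k \geq 1$ (else no element of the corner generates a minimal ideal, in particular not $1_{\{x\}}$), so the isotropy is trivial and $x$ is aperiodic, i.e.\ $v \in P_l(\Lambda)$.

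For part~\eqref{it2:kp}, the ``if'' direction is immediate: if such a $v \in P_l(\Lambda)$ exists then $KP_K(\Lambda)p_v$ is minimal by part~\eqref{it1:kp}, and a module isomorphic to a minimal left ideal is itself a minimal left ideal. For the ``only if'' direction, suppose $KP_K(\Lambda)a$ is a minimal left ideal. By Corollary~\ref{cor_minimalsingleton} there is a compact open singleton $\{x\} \subset \delta\Lambda$ and an element $a' \in 1_{\{x\}}A_K(\G_\Lambda)1_{\{x\}}$ with $KP_K(\Lambda)a \cong KP_K(\Lambda)a'$ as left modules. Now $1_{\{x\}}A_K(\G_\Lambda)1_{\{x\}}$ is isomorphic to the group algebra $K[x\G_\Lambda x]$ of the isotropy group at $x$, which is a subgroup of $\ZZ^k$; since $KP_K(\Lambda)a' \cong KP_K(\Lambda)a$ is minimal, Proposition~\ref{general} shows the corner has nonzero socle, so by Corollary~\ref{cor:Zk} (and the fact that a nontrivial subgroup of $\ZZ^k$ gives a Laurent-polynomial-type domain that is not a field) the isotropy must be trivial. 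Hence $x\G_\Lambda x = \{x\}$, and writing $Z(v) := r(x)$ — wait, more precisely letting $v \in \Lambda^0$ be the vertex with $x \in Z(v)$, i.e.\ $v$ is the range of $x$ — one checks $v\delta\Lambda = \{x\}$: if $v\delta\Lambda$ had another element $y$, the corner $1_{Z(v)}A_K(\G_\Lambda)1_{Z(v)}$ would be ``too big'' but this is not quite what we need. Instead I would argue directly that since $\{x\}$ is open in $\delta\Lambda$ and $x$ is aperiodic, $v := r(x)$ satisfies $v \in P_l(\Lambda)$ provided $v\delta\Lambda = \{x\}$; and $v\delta\Lambda = \{x\}$ because $Z(v) = v\delta\Lambda$ is the smallest cylinder containing $x$ and $\{x\}$ being open and a single point forces (by the basis structure of the boundary path groupoid topology, $Z(\mu\setminus F)$ sets) that $Z(v)$ itself collapses to $\{x\}$ — this uses that $v$ must be a ``line point'' in the graph-theoretic sense. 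Then $KP_K(\Lambda)p_v = A_K(\G_\Lambda)1_{\{x\}}$ is minimal by part~\eqref{it1:kp}, and $KP_K(\Lambda)a \cong KP_K(\Lambda)a' = A_K(\G_\Lambda)1_{\{x\}}a' \cong A_K(\G_\Lambda)1_{\{x\}} = KP_K(\Lambda)p_v$, the middle isomorphism because $a' \in 1_{\{x\}}A_K(\G_\Lambda)1_{\{x\}} = K1_{\{x\}}$ is a nonzero scalar multiple of $1_{\{x\}}$.

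The main obstacle I anticipate is the step establishing that $v\delta\Lambda = \{x\}$ (equivalently $Z(v) = \{x\}$) from the minimality hypothesis — i.e.\ translating ``the left ideal is minimal'' into the purely combinatorial condition on the $k$-graph that $v$ emits, along every path, enough structure to collapse the cylinder to a point. This requires care with the topology on $\delta\Lambda$ for finitely aligned (not row-finite) $k$-graphs, where the basic open sets are of the form $Z(\mu\setminus F)$ and boundary paths can terminate at vertices that are not ``sources'' in the naive sense. I would handle it by the same argument that underlies Proposition~\ref{prop_existence_singleton}: if $Z(v)$ properly contains $\{x\}$ with $\{x\}$ open, pick a compact open $V \subset Z(v)$ with $x \notin V$; then $1_V \cdot 1_{Z(v)} = 1_V \neq 0$ but $A_K(\G_\Lambda)1_V \subsetneq A_K(\G_\Lambda)1_{Z(v)}$ since evaluating at $x$ shows $1_{Z(v)} \notin A_K(\G_\Lambda)1_V$ (any $g1_V$ vanishes at $x$ when no element of $\G_\Lambda$ composes $x$ through $V$ using triviality of isotropy), contradicting minimality. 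The remaining verifications — that $p_v = 1_{Z(v)}$, that the corner at an open singleton is the group algebra of the isotropy, and that isomorphic modules are simultaneously minimal — are routine given the earlier results.
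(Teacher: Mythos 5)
Your treatment of part~\eqref{it1:kp} is essentially correct and follows the paper's route (Corollary~\ref{cor_1xminimal} for the reverse implication; Proposition~\ref{prop_existence_singleton} plus Corollary~\ref{cor:Zk} for the forward one), and your extra argument that minimality of $A_K(\G_\Lambda)1_{Z(v)}$ forces $Z(v)$ to be a single point is valid: if $V\subset Z(v)$ is compact open with $x\notin V$, then every $g1_V$ vanishes at $x$ simply because a unit $b\in V$ with $s(b)=x$ would have to equal $x$ — no appeal to triviality of isotropy is needed there. The genuine gap is in the only-if direction of part~\eqref{it2:kp}, precisely at the step the paper compresses into ``a computation''. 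After Corollary~\ref{cor_minimalsingleton} and the isotropy argument you have a compact open aperiodic singleton $\{x\}\subset\delta\Lambda$ with $A_K(\G_\Lambda)a\cong A_K(\G_\Lambda)1_{\{x\}}$, but your choice $v:=r(x)$ and the claim that $v\delta\Lambda=Z(v)$ ``collapses to $\{x\}$'' because $\{x\}$ is open is false. Take the $1$-graph with vertices $v,w,u$, edges $e\colon v\to w$, $f\colon v\to u$, a loop at $u$, and $w$ a sink: the boundary path $x=e$ has $\{x\}=Z(e)$ compact open with trivial isotropy, so $A_K(\G_\Lambda)1_{\{x\}}$ is a minimal left ideal, yet $r(x)\delta\Lambda$ contains both $e$ and the paths through $f$, so $r(x)\notin P_l(\Lambda)$ and KP$_K(\Lambda)p_{r(x)}$ is not minimal. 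Your fallback argument (``pick $V\subset Z(v)$ missing $x$ and contradict minimality'') is circular here: in part~\eqref{it2:kp} there is no hypothesis that KP$_K(\Lambda)p_v$ is minimal, and in the example it genuinely is not, so no contradiction can be derived — the correct conclusion is only that $r(x)$ is the wrong vertex.

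What is actually needed is to move \emph{along} $x$ before reading off the vertex. Since $\{x\}$ is open, $\{x\}=Z(\mu\setminus G)$ for some $\mu\in\Lambda$ and finite $G\subseteq s(\mu)\Lambda$; choosing $n\in\NN^k$ with $n\geq d(\mu)$ and $n\geq d(\mu\alpha)$ for all $\alpha\in G$, any boundary path agreeing with $x$ up to degree $n$ lies in $Z(\mu\setminus G)$ and hence equals $x$, so $Z(x(0,n))=\{x\}$ and $v:=x(n)$ satisfies $v\delta\Lambda=\{\sigma^n(x)\}$ with $\sigma^n(x)$ aperiodic, i.e.\ $v\in P_l(\Lambda)$. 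The compact open bisection $\{(x,d(x(0,n)),\sigma^n(x))\}$ then implements a left-module isomorphism $A_K(\G_\Lambda)1_{\{x\}}\cong A_K(\G_\Lambda)1_{\{\sigma^n(x)\}}=\,$KP$_K(\Lambda)p_v$, which combined with $A_K(\G_\Lambda)a\cong A_K(\G_\Lambda)1_{\{x\}}$ (your observation that the corner at $x$ is $K1_{\{x\}}$ is fine) finishes the argument. Without this relocation of the vertex your proof of part~\eqref{it2:kp} does not go through.
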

 \begin{proof}
     For item~\eqref{it1:kp}, the forward implication is a consequence of Proposition~\ref{prop_existence_singleton} and Corollary~\ref{cor:Zk}.  The reverse implication is given by Corollary~\ref{cor_1xminimal}.  For item~\eqref{it2:kp}, the forward implication is given by Corollary~\ref{cor_minimalsingleton} and a computation.  The reverse implication follows from item~\eqref{it1:kp}.
 \end{proof}
 \end{example}

To summarize,  we know that minimal left ideals imply the existence of open singletons.  Indeed, if we have a minimal left ideal, it will be isomorphic to one that arises from an open singleton in the unit space.  On the other hand, if  we impose some conditions on the isotropy groups, we know that open singletons give rise to minimal ideals. However, one of the questions that remains if this is necessary and sufficient. The answer is no.  We need additional conditions on the groupoid to be able to characterise all minimal ideals. As we pointed out earlier, Steinberg algebras generalise group algebras, and characterising the socle of these algebras, in general, is tough. 

\begin{remark}\label{remark_local_socle}{\rm
    Suppose we have an ample Hausdorff groupoid  $\G$ with open $\{x\}\subset \G\0$ and $I = A_K(\G)a$ for some $a \in 1_{\{x\}}A_K(\G)1_{\{x\}}$ where $I$ is a minimal left ideal. Take \[
    J=1_{\{x\}}I1_{\{x\}} = 1_{\{x\}}A_K(\G)1_{\{x\}}a.
    \]
    Then $J$ is nonzero and we know that $J$ is a minimal left ideal of the algebra $1_{\{x\}}A_K(\G)1_{\{x\}}$ by Proposition \ref{general}.
Therefore, if the algebra $1_{\{x\}}A_K(\G)1_{\{x\}}$ does not contain minimal left ideals, then $I$ can not be minimal. In this way, we have a necessary condition for the existence of minimal left ideals, that is, if $\soc(A_K(\G)) \neq 0$,  then there exists open $\{x\} \subset \G\0$  such that $\soc(1_{\{x\}}A_K(\G)1_{\{x\}})\neq 0$. Next, we ask ourselves is if there is any characterisation of the minimal left ideals of the algebra $1_{\{x\}}A_K(\G)1_{\{x\}}$. The answer of that depends on the structure of the isotropy group $x \G x$.}
\end{remark}
 \begin{remark}\label{lemma_corner_steinberg}{\rm 
     Taking into account \cite[Proposition 2.3]{Abramscorner}, we have that if $\G$ is an ample Hausdroff groupoid and $x \in \G\0$ with $\{x\}$ open, then the algebra $1_{\{x\}}A_K(\G)1_{\{x\}}$ is isomorphic to $A_K(x\G x)$, that is, the Steinberg algebra of the isotropy group $x\G x$. }
 \end{remark}

Remark \ref{remark_local_socle} and Remark \ref{lemma_corner_steinberg} imply that the minimal ideals of a Steinberg algebra are related to the minimal ideals of a group algebra. In general, finding the minimal ideals of a group algebra is an open question. There are partial answers, but no complete characterisation. The reader can find some of them in \cite{BENSON2023459,farkas1981group,richardson1976group}. In this sense, and with the objective of generalising  the socle of the Leavitt path and Kumjian-Pask algebras, from here, we focus on the case that $\G$ is an ample Hausdorff groupoid such that for every $x \in \G\0$, one has: $\soc(A_K(x\G x)) \neq \{0\}$ if and only if $x\G x = \{x\}$.

\section{The socle of a Steinberg algebra}
\label{sec:socle}

Taking into account all the results obtained in the previous paragraphs, we have established the previous section, we have the following theorem.

\begin{theorem}\label{puerto} Let $\G$ be an ample Hausdorff groupoid satisfying the condition:
\begin{equation}\label{conditionpotente}
    \text{for every
     } x \in \G^{(0)}, \; \soc(A_K(x\G x))\neq 0 \Leftrightarrow x\G x = \{x\}. \tag{LP}
\end{equation}
Then the following statements are equivalent: 
\begin{enumerate}
    \item\label{it1:main} $\soc(A_K(\G))\neq 0$.
    \item\label{it2:main} There exists an open set $\{x\} \subset \G\0$ such that $x \G x=\{x\}$.
    \item\label{item3:main} There exists an open set $\{x\} \subset \G\0$ such that $x \G x$ is finite.
\end{enumerate}
\end{theorem}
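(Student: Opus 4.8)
The plan is to establish the cycle of implications $\eqref{it1:main}\Rightarrow\eqref{it2:main}\Rightarrow\eqref{item3:main}\Rightarrow\eqref{it1:main}$, with the hypothesis~\eqref{conditionpotente} entering only in the first arrow. The implication $\eqref{it2:main}\Rightarrow\eqref{item3:main}$ is immediate, since a singleton is finite. For $\eqref{item3:main}\Rightarrow\eqref{it1:main}$, observe that an open singleton $\{x\}$ is automatically compact, hence a compact open set; so if $|x\G x|=n<\infty$ then Theorem~\ref{Alhaurin} exhibits $A_K(\G)\sum_{\alpha\in x\G x}1_{\{\alpha\}}$ as a minimal left ideal, whence $\soc(A_K(\G))\neq 0$. (For completeness, $\eqref{it2:main}\Rightarrow\eqref{it1:main}$ also follows at once from Corollary~\ref{cor_1xminimal}, but this is subsumed by the cycle.)

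The substance of the argument is $\eqref{it1:main}\Rightarrow\eqref{it2:main}$. Suppose $\soc(A_K(\G))\neq 0$, so that $A_K(\G)$ has a minimal left ideal $I$. By Corollary~\ref{cor_minimalsingleton} there is a compact open singleton $\{x\}\subset\G^{(0)}$ and an element $a\in 1_{\{x\}}A_K(\G)1_{\{x\}}$ with $I\cong A_K(\G)a$ as left $A_K(\G)$-modules; in particular $A_K(\G)a$ is itself a minimal left ideal. Applying Proposition~\ref{general} with $R=A_K(\G)$ and $e=1_{\{x\}}$ (this is exactly Remark~\ref{remark_local_socle}), the left ideal $1_{\{x\}}A_K(\G)1_{\{x\}}a$ of the corner algebra $1_{\{x\}}A_K(\G)1_{\{x\}}$ is nonzero and minimal, so $\soc\bigl(1_{\{x\}}A_K(\G)1_{\{x\}}\bigr)\neq 0$. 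By Remark~\ref{lemma_corner_steinberg} this corner is isomorphic to the Steinberg algebra $A_K(x\G x)$ of the isotropy group at $x$, hence $\soc(A_K(x\G x))\neq 0$. Now condition~\eqref{conditionpotente} forces $x\G x=\{x\}$, which together with the openness of $\{x\}$ is precisely statement~\eqref{it2:main}.

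The proof is thus a direct assembly of results from Section~\ref{sec:leftideals}, and the only points needing a line of verification are bookkeeping ones: that the module isomorphism of Corollary~\ref{cor_minimalsingleton} transports minimality of $I$ to $A_K(\G)a$, and that $1_{\{x\}}A_K(\G)a=1_{\{x\}}A_K(\G)1_{\{x\}}a$ (true because $a=1_{\{x\}}a1_{\{x\}}$) so that Proposition~\ref{general} applies verbatim. I expect no genuine obstacle at this stage; the real work was done in the preparatory results---extracting an open singleton from a minimal ideal (Corollary~\ref{cor_minimalsingleton}) and identifying corners with isotropy group algebras (Remark~\ref{lemma_corner_steinberg})---which is what allows hypothesis~\eqref{conditionpotente} to be invoked in the single decisive step above.
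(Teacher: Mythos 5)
Your proposal is correct and follows essentially the same route as the paper: the cycle $\eqref{it2:main}\Rightarrow\eqref{item3:main}$ trivially, $\eqref{item3:main}\Rightarrow\eqref{it1:main}$ by Theorem~\ref{Alhaurin}, and $\eqref{it1:main}\Rightarrow\eqref{it2:main}$ via Corollary~\ref{cor_minimalsingleton} together with Proposition~\ref{general}, Remark~\ref{lemma_corner_steinberg} and Condition~\eqref{conditionpotente}. In fact you spell out the last implication more completely than the paper's one-line proof (which cites only Corollary~\ref{cor_minimalsingleton}, leaving the passage from the open singleton to $x\G x=\{x\}$ implicit); your argument is exactly the one the paper carries out later in Remark~\ref{remark_local_socle} and Theorem~\ref{thm:main}.
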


\begin{proof}
Corollary~\ref{cor_minimalsingleton} gives item~\eqref{it1:main} implies
item~\eqref{it2:main}.  Item~\eqref{it2:main} implies item~\eqref{item3:main} is trivial and item~\eqref{item3:main} implies item~\eqref{it1:main} is Theorem~\ref{Alhaurin}.
\end{proof}

Observe that, in general there are many examples of groupoids that do not satisfy Condition \eqref{conditionpotente} even though Corollary \ref{cor_1xminimal} leads to one of the implications: if $x\G x = \{x\}$, then $\soc(A_K(x\G x)) = K1_{\{x\}} \neq 0$. Indeed, Theorem \ref{Alhaurin} it is enough to consider a groupoid with a discrete topology such that there is some $x \in \G^{(0)}$ with $|x\G x | > 1$.

The groupoids associated to Kumjian-Pask algebras, and hence Leavitt path algebras, satisfy Condition~\eqref{conditionpotente} of Theorem \ref{puerto}. Indeed, as we saw in Example~\ref{ex:kp}, if $x \in \G_{\Lambda}\0$ for a $k$-graph $\Lambda$ (including the Leavitt path algebra setting where $k=1$), the isotropy group $x\G_{\Lambda} x = \{x\}$ or $x \G x \cong \ZZ^l, l\leq k$. However, when $x\G x \cong \ZZ^l$, we have that $\soc(K\ZZ^l) = \{0\}$ and we are under the claimed condition.

Next we focus on the description of the socle of general Steinberg algebras satisfying Condition~\eqref{conditionpotente}.
In the remainder of this section, we give our final characterisation of the socle. In the next theorem we will use the notation $(S)$ to denote the two-sided ideal of an algebra generated by the set $S$.

\begin{theorem}
\label{thm:main}
    Let $\G$ be an ample Hausdorff groupoid satisfying Condition~\eqref{conditionpotente}. Then the left socle of the algebra $A_K(\G)$ is the ideal 
    \begin{equation}
        \soc(A_K(\G)) = \left (1_{\{x\}} \colon \{x\} \subset \G\0 \text{ is open and }  x\G x = \{x\}\right ).
    \end{equation}
    If $I_{x} = A_K(\G)1_{\{x\}}$ is a minimal left ideal, then its corresponding homogeneous component is
\begin{equation}
    M = \sum_{I \cong I_x} I = (1_{\{x\}}).
\end{equation}
\end{theorem}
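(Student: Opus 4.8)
The plan is to prove the two displayed equalities by a double inclusion argument, using the structural results of Section~\ref{sec:leftideals} together with Condition~\eqref{conditionpotente}. For the first equality, write $T = \left(1_{\{x\}} \colon \{x\}\subset\G^{(0)}\text{ open},\ x\G x = \{x\}\right)$. The inclusion $T \subseteq \soc(A_K(\G))$ is the easier direction: by Corollary~\ref{cor_1xminimal}, whenever $\{x\}$ is open with $x\G x = \{x\}$ the left ideal $A_K(\G)1_{\{x\}}$ is minimal, so $1_{\{x\}}\in\soc(A_K(\G))$; since the left socle of an algebra with local units is a two-sided ideal (it is a standard fact that the sum of the minimal left ideals is invariant under right multiplication, because right-multiplying a minimal left ideal gives a minimal left ideal or zero), the ideal generated by all such $1_{\{x\}}$ is contained in $\soc(A_K(\G))$.

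For the reverse inclusion $\soc(A_K(\G))\subseteq T$, it suffices to show that every minimal left ideal $I$ of $A_K(\G)$ is contained in $T$. Given such an $I$, Corollary~\ref{cor_minimalsingleton} produces a compact open singleton $\{x\}\subset\G^{(0)}$ and an element $a\in 1_{\{x\}}A_K(\G)1_{\{x\}}$ with $I\cong A_K(\G)a$ as left modules. By Remark~\ref{remark_local_socle}, $1_{\{x\}}A_K(\G)a$ is then a nonzero minimal left ideal of the corner algebra $1_{\{x\}}A_K(\G)1_{\{x\}}$, so $\soc(1_{\{x\}}A_K(\G)1_{\{x\}})\neq 0$; by Remark~\ref{lemma_corner_steinberg} this corner is isomorphic to $A_K(x\G x)$, so $\soc(A_K(x\G x))\neq 0$, and Condition~\eqref{conditionpotente} forces $x\G x = \{x\}$. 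Hence the corner is $K 1_{\{x\}}$, the element $a$ is a nonzero scalar multiple of $1_{\{x\}}$, and $I\cong A_K(\G)a = A_K(\G)1_{\{x\}}$. It remains to deduce $I\subseteq T$ from the isomorphism $I\cong A_K(\G)1_{\{x\}}$: here I would argue that since $I$ is minimal and $A_K(\G)$ has local units, $I = A_K(\G)b$ for any $0\neq b\in I$, and using the module isomorphism $\theta\colon A_K(\G)1_{\{x\}}\to I$ we have $I = A_K(\G)\theta(1_{\{x\}}) = A_K(\G)\theta(1_{\{x\}}1_{\{x\}}) = A_K(\G)\theta(1_{\{x\}})1_{\{x\}}\subseteq A_K(\G)A_K(\G)1_{\{x\}}A_K(\G) = (1_{\{x\}})\subseteq T$, using that $\theta$ is right $A_K(\G)$-linear. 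Summing over all minimal left ideals gives $\soc(A_K(\G))\subseteq T$.

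For the statement about homogeneous components, fix a minimal left ideal $I_x = A_K(\G)1_{\{x\}}$ with $\{x\}$ open and $x\G x = \{x\}$, and let $M = \sum_{I\cong I_x} I$ be the sum of all minimal left ideals isomorphic to $I_x$. The inclusion $M\subseteq (1_{\{x\}})$ follows from the argument of the previous paragraph applied verbatim to each summand $I$: any $I\cong I_x$ satisfies $I = A_K(\G)\theta(1_{\{x\}})1_{\{x\}}\subseteq (1_{\{x\}})$. For the reverse inclusion $(1_{\{x\}})\subseteq M$, note $1_{\{x\}} = 1_{\{x\}}1_{\{x\}}1_{\{x\}}\in I_x\subseteq M$, so it is enough to check that $M$ is a two-sided ideal; this holds because for any $g\in A_K(\G)$ the right multiple $Ig$ of a minimal left ideal is again a minimal left ideal or zero, and right multiplication by $g$ gives a module homomorphism $I\to Ig$ which, being nonzero on a simple module, is an isomorphism, so $Ig\cong I\cong I_x$ and $Ig\subseteq M$. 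Hence $M$ is a two-sided ideal containing $1_{\{x\}}$, so $(1_{\{x\}})\subseteq M$, giving equality.

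The main obstacle I anticipate is the careful handling of the passage from "$I$ is isomorphic as a left module to $A_K(\G)1_{\{x\}}$" to "$I$ is contained in the two-sided ideal $(1_{\{x\}})$": one must use right-linearity of the module isomorphism together with the idempotency $1_{\{x\}}^2 = 1_{\{x\}}$ and the existence of local units, rather than naively identifying $I$ with $A_K(\G)1_{\{x\}}$ as a subset. The rest is an assembly of Corollary~\ref{cor_1xminimal}, Corollary~\ref{cor_minimalsingleton}, Remark~\ref{remark_local_socle}, Remark~\ref{lemma_corner_steinberg}, and Condition~\eqref{conditionpotente}, plus the standard fact that right multiples of minimal left ideals are minimal or zero.
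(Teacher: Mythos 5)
Your proposal is correct and follows essentially the same route as the paper: open singletons from Corollary~\ref{cor_minimalsingleton}/Proposition~\ref{prop_existence_singleton}, the corner argument via Proposition~\ref{general} and Remark~\ref{lemma_corner_steinberg} combined with Condition~\eqref{conditionpotente} for the inclusion $\soc(A_K(\G))\subseteq (1_{\{x\}}\colon\ldots)$, Corollary~\ref{cor_1xminimal} plus two-sidedness of the socle for the reverse inclusion, and the idempotent-through-the-isomorphism trick plus two-sidedness of $M$ for the homogeneous component (the paper simply keeps $I=A_K(\G)1_{\{x\}}a$ as an actual subset, which makes $I\subseteq(1_{\{x\}})$ immediate, while you also spell out the standard facts that the socle and $M$ are two-sided ideals). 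One small repair: a left $A_K(\G)$-module isomorphism $\theta$ is not right-linear, so the step $\theta(1_{\{x\}}1_{\{x\}})=\theta(1_{\{x\}})1_{\{x\}}$ is unjustified as written; instead use left-linearity, $\theta(1_{\{x\}})=\theta\bigl(1_{\{x\}}\cdot 1_{\{x\}}\bigr)=1_{\{x\}}\theta(1_{\{x\}})$, which gives $I=A_K(\G)1_{\{x\}}\theta(1_{\{x\}})\subseteq A_K(\G)1_{\{x\}}A_K(\G)=(1_{\{x\}})$ — exactly the manipulation the paper performs with $\phi^{-1}$.
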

\begin{proof}
    Let $I$ be a minimal left ideal. We can consider that $I = A_K(\G)a$ and by Proposition \ref{prop_existence_singleton} there is a compact open bisection $\{x\}$ with $x \in \supp(a)$. This means that $1_{\{x\}}a \neq 0$ and by minimality, we have that $I = A_K(\G)1_{\{x\}}a$. Not only that but there is a $A_K(\G)$-modules isomorphism $I \cong A_K(\G)1_{\{x\}}a1_{\{x\}} =: J$. Since $J$ is a minimal left $A_K(\G)$-module, then $1_{\{x\}}J$ it is also a minimal left $1_{\{x\}}A_K(\G)1_{\{x\}}$-module. In consequence, by Remark \ref{lemma_corner_steinberg}, $\soc(A_K(x\G x)) =  \soc(1_{\{x\}}A_K(\G)1_{\{x\}}) \neq \{0\}$ which is only possible if and only if $x \G x = \{x\}$. To sum up, we have that $I = A_K(\G)1_{\{x\}}a$ with $\{x\} \subset \G\0$ and $x\G x = \{x\}$, then $I \subseteq \left (1_{\{x\}} \colon \{x\} \subset \G\0, x\G x = \{x\} \right )$ and 
    $$\soc(A_K(\G)) \subseteq \left (1_{\{x\}} \colon \{x\}\subset \G\0, x\G x = \{x\}\right ).$$ 
\color{black}
   Now, consider $x \in \G^{(0)}$ with $\{x\}\subset \G^{(0)}$ and $x \G x = \{x\}$. Then, by Corollary \ref{cor_1xminimal} we have that $I_x = A_K(\G)1_{\{x\}}$ is a minimal left ideal and $I_x \subset \soc(A_K(\G))$. Thus $1_\{x\}\in\soc(A_K(\G)$ and since the socle is an ideal we get 
   $$\left (1_{\{x\}} \colon \{x\} \subset \G\0, x\G x = \{x\}\right ) \subseteq \soc(A_K(\G)).$$

    In order to prove the second part take an ideal $I$ such that $I \cong I_x$. As we have mentioned before, we can consider $I = A_K(\G)a$ with $a \in I$. If $\phi \colon I \to I_x$ is a $A_K(\G)$-module isomorphism, then we have that $\phi(a) = f1_{\{x\}}$ and $\phi^{-1}(1_{\{x\}}) = g a$, for some $f,g\in A_K(\G)$. 
 Making use of this fact we get $a = \phi^{-1}(\phi(a))=\phi^{-1}( f1_{\{x\}}) = \phi^{-1}( f1_{\{x\}}1_{\{x\}}) =f1_{\{x\}}\phi^{-1}(1_{\{x\}}) = f1_{\{x\}}ga \in (1_{\{x\}})$ and $I \subset (1_{\{x\}})$. As a consequence, $M \subset (1_{\{x\}})$.

    For the converse, since $I_x$ is a minimal left ideal we have that $I_x \subset M$. And because $M$ is a two-sided ideal we get that $(1_{\{x\}}) = A_K(\G)1_{\{x\}}A_K(\G) = I_x A_K(\G) \subset M$.
\end{proof}

In the description of the homogeneous components, there are elements that generate the ideal that are redundant. We can give a more precise result as follows.

\begin{remark}{\rm 
    It is important to highlight that we can be more precise with the description of the socle. If we define the equivalence relation $\mathcal{R}$ as $x \mathcal{R} y$ if and only if there is $\gamma \in \G$ such that $r(\gamma) = x$ and $s(\gamma) = y$. In this case, we have that if $\{x\} \subset \G^{(0)}$ and $x \mathcal{R} y$ then $(1_{\{x\}}) = (1_{\{y\}})$. If $x \mathcal{R} y$, then there is $\gamma$ with $r(\gamma) = x$ and $s(\gamma)= y$.  The rest is immediate from the fact that $1_{\{x\}} = 1_{\{\gamma\}}1_{\{y\}}1_{\{\gamma^{-1}\}}$ and $1_{\{y\}} = 1_{\{\gamma^{-1}\}}1_{\{x\}}1_{\{\gamma\}}$. As a consequence, we can describe the socle as
    \begin{equation}
        \soc(A_K(\G)) = (1_{\{x\}} \colon [x] \in \G^{(0)}/\mathcal{R}, x \G x = \{x\} \text{ is open}). 
    \end{equation}}
\end{remark}

\begin{proposition}\label{componentsocle}
    Let $\G$ be a  Hausdorff ample groupoid  satisfying Condition ~\eqref{conditionpotente}. If $M$ is the homogeneous component of the socle corresponding to $[x]$, then 
    \begin{equation*}
        M = (1_{\{x\}}) = \bigoplus_{y \in [x]} A_K(\G)1_{\{y\}}.
    \end{equation*}
\end{proposition}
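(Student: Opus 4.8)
The plan is to prove the two equalities $M = (1_{\{x\}})$ and $(1_{\{x\}}) = \bigoplus_{y\in[x]}A_K(\G)1_{\{y\}}$ separately. The first equality is already contained in Theorem~\ref{thm:main}: since $x\G x = \{x\}$ is open, Corollary~\ref{cor_1xminimal} tells us $I_x = A_K(\G)1_{\{x\}}$ is a minimal left ideal, and the second part of Theorem~\ref{thm:main} identifies its homogeneous component $M = \sum_{I\cong I_x}I$ with $(1_{\{x\}})$. So I would open by simply invoking this. The real content is the decomposition of $(1_{\{x\}})$ as the internal direct sum $\bigoplus_{y\in[x]}A_K(\G)1_{\{y\}}$.

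First I would show the sum $\sum_{y\in[x]}A_K(\G)1_{\{y\}}$ is contained in $(1_{\{x\}})$: for each $y\in[x]$ pick $\gamma\in\G$ with $r(\gamma)=x$, $s(\gamma)=y$, so that $1_{\{y\}} = 1_{\{\gamma^{-1}\}}1_{\{x\}}1_{\{\gamma\}}\in(1_{\{x\}})$ (this is exactly the computation in the preceding Remark, and it also shows $y\G y = \{y\}$ is open, so each $A_K(\G)1_{\{y\}}$ is itself a minimal left ideal isomorphic to $I_x$, hence sits inside $M$). For the reverse inclusion, $(1_{\{x\}}) = A_K(\G)1_{\{x\}}A_K(\G)$, so it suffices to see $1_{\{x\}}A_K(\G)\subseteq\sum_{y\in[x]}A_K(\G)1_{\{y\}}$; writing a general element of $1_{\{x\}}A_K(\G)$ as a linear combination of $1_{\{x\}}1_B = 1_{xB}$ for compact open bisections $B$, and noting $xB$ is either empty or a singleton $\{\beta\}$ with $r(\beta)=x$, we get $1_{xB} = 1_{\{\beta\}} = 1_{\{\beta\}}1_{\{s(\beta)\}}$ with $s(\beta)\in[x]$, which lies in $A_K(\G)1_{\{s(\beta)\}}$.

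The remaining point is that the sum is direct. Here I would use the local units given by characteristic functions of compact open subsets of $\G^{(0)}$. Suppose $\sum_{i=1}^n f_i 1_{\{y_i\}} = 0$ with the $y_i\in[x]$ distinct and $f_i\in A_K(\G)$; I want each $f_i1_{\{y_i\}} = 0$. The key observation is that $1_{\{y_i\}}$ and $1_{\{y_j\}}$ are orthogonal idempotents for $i\neq j$ when restricted appropriately — more precisely, right-multiplying the relation by $1_{\{y_j\}}$ and using $1_{\{y_i\}}1_{\{y_j\}} = 1_{\{y_i\}\cap\{y_j\}} = 0$ for $i\neq j$ (as $y_i\neq y_j$ are units and products of units in the unit space are intersections, cf. Remark~\ref{Donsimon}) kills every term except $f_j1_{\{y_j\}}1_{\{y_j\}} = f_j1_{\{y_j\}}$. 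Hence $f_j1_{\{y_j\}} = 0$ for all $j$, giving directness.

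The main obstacle — really the only subtlety — is the directness argument, specifically making sure that $1_{\{y_i\}}1_{\{y_j\}} = 0$ for distinct units: this rests on the fact that for subsets $B,C\subseteq\G^{(0)}$ one has $BC = B\cap C$ (Remark~\ref{Donsimon}), together with Hausdorffness of $\G^{(0)}$ so that distinct points give disjoint singletons. Everything else is bookkeeping with bisections and the already-established correspondence between open singletons with trivial isotropy and minimal left ideals. I would close by remarking that each summand $A_K(\G)1_{\{y\}}$ is a minimal left ideal isomorphic to $I_x$, so this is genuinely the isotypic decomposition of $M$.
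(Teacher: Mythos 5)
Your proposal is correct and follows essentially the same route as the paper: the containment $\bigoplus_{y\in[x]}A_K(\G)1_{\{y\}}\subseteq(1_{\{x\}})$ via conjugation $1_{\{y\}}=1_{\{\gamma^{-1}\}}1_{\{x\}}1_{\{\gamma\}}$, the reverse inclusion by writing elements of $1_{\{x\}}A_K(\G)$ as combinations of singletons $1_{\{\delta\}}$ with $r(\delta)=x$ and factoring off $1_{\{s(\delta)\}}$, and directness by right-multiplying by $1_{\{y_j\}}$ and using orthogonality of distinct unit singletons. Your explicit remark that each $\{y\}$, $y\in[x]$, is open (so $1_{\{y\}}\in A_K(\G)$) is a detail the paper leaves implicit, but otherwise the two arguments coincide.
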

\begin{proof}
    First, we check that it is a direct sum: $(A_K(\G)1_{\{y\}}) \cap (\sum_{y_i \neq y} A_K(\G)1_{\{y_i\}}) = \{0\}$ for every $y,y_i \in [x]$ with $y\neq y_i$ for each $i$. If there is an element in the intersection, this means that there are $f,g_i \in A_K(\G)$ such that $f 1_{\{y\}} = \sum_{y \neq y_i}g_i 1_{\{y_i\}}$. Multiplying both sides to the right by $1_{\{y\}}$ we get that $f 1_{\{y\}} = \sum_{y \neq y_i} g_i 1_{\{y_i\}} 1_{\{y\}}= 0$ as we claimed. Define $J =\bigoplus_{y \in [x]} A_K(\G)1_{\{y\}}$. Now, we will prove  $M =  J$. Consider $f 1_{\{y\}} \in J $ and $\gamma \in y\G x$ ($x\G y \neq \emptyset$ because $y$ belongs to the orbit of $x$), we have that $f 1_{\{y\}} = f 1_{\{\gamma\}}1_{\{x\}}1_{\{\gamma^{-1}\}} \in M$. As a consequence, $J \subseteq M$.

    Take $0 \neq f1_{\{x\}}g \in M$, in particular $1_{\{x\}}g \neq 0$ and without loss of generality we can assume $1_{\{x\}}g = g = \sum_{i=1}^n a_i1_{\{\delta_i\}}$ with $r(\delta_i)= x$ and $s(\delta_i) = y_i \in [x]$. Thus,  $f1_{\{x\}}g = \sum_{i=1}^n a_i f1_{\{\delta_i\}} 1_{\{y_i\}} \in J$. Then $M \subseteq J$.
\end{proof}

\begin{remark}\rm 
Following \cite[Proposition 2.8]{Simon1} the map $(\sum_i r_i 1_{B_i})^*:=\sum_i r_i 1_{B_i^{-1}}$ is an involution on $A_K(\G)$, hence taking into account Proposition \ref{componentsocle} the homogeneous component $ M = (1_{\{x\}})$ satisfies $M^*=M$, so the left socle is self-adjoint. Since the involution applied to the left socle is the right socle, then we have the coincidence of both socles.
\end{remark}

Assume that $\G$ is an ample Hausdorff groupoid with $\{x\}\subset \G^{(0)}$ such that $x\G x=\{x\}$. Then we know that
$\text{End}(A_K(\G)1_{\{x\}})\cong 1_{\{x\}} A_K(\G) 1_{\{x\}}$ by 
Remark \ref{tejeringo}. On the other hand, since $x\G x = \{x\}$ we can write
$1_{\{x\}} A_K(\G) 1_{\{x\}}=K 1_{\{x\}}$. We conclude that 
$$\text{End}(A_K(\G)1_{\{x\}})\cong K 1_{\{x\}}.$$

By using \cite[Theorem 1, Chapter IV, \S 3]{jacobson} the homogeneous components of the socle of a semiprime algebra $A$ (with minimal left ideals), are simple algebras with nonzero socle. 
In fact, the minimal left ideals are of the form $I=Ae$ for a division idempotent. Then $\Delta:=eAe$ is a division algebra $e$. If $A$ is simple then $A\cong\text{End}_\Delta(I)$. In the particular case of a homogeneous component $M=(1_{\{x\}})$ of a semiprime Steinberg algebra with nonzero socle we have seen that $\Delta\cong K$. So $M\cong \text{End}_K(I)$ is of the form $\mathcal{M}_n(K)$ where $n$ is finite or infinite. We know that
$n=\dim_K(I)$.

\begin{corollary}
    Let $\G$ be an ample Hausdorff groupoid satisfying Condition ~\eqref{conditionpotente} and such that $A_K(\G)$ is semiprime. Then 
    \begin{equation}
        \soc(A_K(\G)) \cong \bigoplus_{[x] \in \G^{(0)}/\mathcal{R} \colon x \G x = \{x\}} \mathcal{M}_{\left |[x] \right |}(K).
    \end{equation}
\end{corollary}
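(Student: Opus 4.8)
The plan is to combine Theorem~\ref{thm:main}, Proposition~\ref{componentsocle}, and the structure theory of the homogeneous components recalled in the paragraph preceding the statement, using semiprimeness to upgrade the module-theoretic description to a ring isomorphism. First I would recall that, by Theorem~\ref{thm:main} together with the refinement given by the relation $\mathcal{R}$, the left socle decomposes as $\soc(A_K(\G)) = \bigoplus_{[x]} M_{[x]}$ where the sum runs over the $\mathcal{R}$-classes $[x]$ with $\{x\}$ open and $x\G x = \{x\}$, and $M_{[x]} = (1_{\{x\}})$ is the homogeneous component. This is a direct sum of ideals because distinct homogeneous components of the socle of a semiprime algebra intersect trivially (Wedderburn theory via \cite[Theorem 1, Chapter IV, \S 3]{jacobson}), so it suffices to identify each $M_{[x]}$ with $\mathcal{M}_{|[x]|}(K)$.

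Next I would fix one such class $[x]$ and analyse $M_{[x]}$. Since $A_K(\G)$ is semiprime and $I_x = A_K(\G)1_{\{x\}}$ is a minimal left ideal generated by the idempotent $1_{\{x\}}$ (Corollary~\ref{cor_1xminimal}), the corner $1_{\{x\}}A_K(\G)1_{\{x\}}$ is a division ring; by Corollary~\ref{cor_1xminimal} (or Remark~\ref{lemma_corner_steinberg} with $x\G x = \{x\}$) it equals $K1_{\{x\}}\cong K$. The general Wedderburn argument then gives $M_{[x]} \cong \End_{\Delta}(I_x)$ with $\Delta = 1_{\{x\}}A_K(\G)1_{\{x\}}\cong K$, i.e. $M_{[x]}\cong \End_K(I_x)\cong \mathcal{M}_n(K)$ with $n = \dim_K(I_x)$, where $\mathcal{M}_n(K)$ denotes column-finite matrices when $n$ is infinite. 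It then remains to compute $\dim_K(I_x)$, and here I would invoke Proposition~\ref{componentsocle}: $M_{[x]} = \bigoplus_{y\in[x]}A_K(\G)1_{\{y\}}$, and each summand $A_K(\G)1_{\{y\}}\cong A_K(\G)1_{\{x\}} = I_x$ (the isomorphism being conjugation by $1_{\{\gamma\}}$ for $\gamma\in y\G x$, exactly as in the proof of Proposition~\ref{componentsocle} and the Remark before it). Thus $M_{[x]}$, as a left module over itself, is a direct sum of $|[x]|$ copies of the simple module $I_x$, which forces $\dim_K(I_x) = |[x]|$ (for a simple artinian algebra $\mathcal{M}_n(\Delta)$ the number of copies of the simple module in a decomposition of the regular module is $n$, and here $\Delta = K$). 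Assembling, $M_{[x]}\cong \mathcal{M}_{|[x]|}(K)$, and summing over $\mathcal{R}$-classes yields the stated formula.

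The main obstacle I anticipate is making the passage $\dim_K(I_x) = |[x]|$ fully rigorous in the infinite-cardinality case: one must be careful that $\mathcal{M}_n(K)$ is interpreted as the (column-finite, or row-finite, depending on the left/right convention) infinite matrix ring, and that "the regular module of $\mathcal{M}_n(K)$ decomposes into $n$ copies of the simple module" remains valid for infinite $n$ — this is standard but should be cited (e.g. via the isomorphism $\End_K(V)$ for a $K$-vector space $V$ of dimension $n$, whose regular left module is $V^{(n)}$ up to the appropriate finiteness condition). A secondary, more routine point is to confirm that the summands in Proposition~\ref{componentsocle} are pairwise isomorphic as $A_K(\G)$-modules, not merely that they sum to $M_{[x]}$; but this is immediate from the explicit conjugation isomorphism already used in the text. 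Everything else — the direct sum over $\mathcal{R}$-classes, the identification of the division ring $\Delta$ with $K$, and the Wedderburn structure of a homogeneous component — is either proved earlier in the excerpt or is classical and citable.
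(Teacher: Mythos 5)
Your overall architecture is the same as the paper's (Jacobson's decomposition of the socle into homogeneous components, plus the identification $1_{\{x\}}A_K(\G)1_{\{x\}}=K1_{\{x\}}$ of the underlying division ring), but you compute the size of the matrices differently: instead of exhibiting a basis of $I_x=A_K(\G)1_{\{x\}}$, you count the multiplicity of $I_x$ in the regular module of the component via Proposition~\ref{componentsocle}. For finite $|[x]|$ this works and is a legitimate alternative. The genuine gap is the infinite case, and your proposed repair points in the wrong direction: if $\mathcal{M}_n(K)$ is read as the column-finite matrix ring $\End_K(V)$ with $\dim_K V=n$ infinite, then that ring is unital and is \emph{not} a direct sum of copies of its simple module (its left socle, the finite-rank operators, is a proper submodule), so the step ``the regular module of $\mathcal{M}_n(K)$ decomposes into $n$ copies of the simple module'' fails under exactly the reading you propose. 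Worse, the Wedderburn identification $M_{[x]}\cong\End_K(I_x)$ you start from cannot hold when $|[x]|$ is infinite: by Proposition~\ref{componentsocle} the homogeneous component is a direct sum of minimal left ideals, hence non-unital and equal to its own socle, whereas $\End_K(I_x)$ is unital and not semisimple over itself. Pushed literally, your chain of deductions would show that $|[x]|$ is always finite, which is false (consider the groupoid of the full equivalence relation on an infinite discrete set, where the socle is the non-unital algebra of finitely supported matrices). So the statement has to be read with $\mathcal{M}_{|[x]|}(K)$ the algebra of matrices with finitely many nonzero entries, and an isomorphism onto that algebra still has to be produced.

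That missing ingredient is precisely what the paper's proof supplies: one verifies that $C=\{1_{\{\gamma\}}\colon \gamma\in s^{-1}(x)\}$ is a $K$-basis of $I_x$ (spanning because $1_B1_{\{x\}}=1_{B\{x\}}$ for a compact open bisection $B$, and linearly independent by left multiplication with $1_{\{\gamma_k^{-1}\}}$), so that $\dim_K I_x=|s^{-1}(x)|=|[x]|$ by Lemma~\ref{queenstown} and the triviality of the isotropy; moreover the functions $1_{\{\gamma\}}$ with $s(\gamma),r(\gamma)\in[x]$ multiply as matrix units, and this matrix-unit basis is what identifies $M_{[x]}$ with the finitely supported $[x]\times[x]$ matrices also when $[x]$ is infinite. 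Your multiplicity count is sound as far as it goes, but note that by itself it computes the number of simple summands of the regular module of $M_{[x]}$ (in the general structure theory of a simple ring equal to its socle this is the dimension of a minimal \emph{right} ideal), not $\dim_K I_x$ and not the matrix-unit structure; so either the explicit basis computation or an appeal to the structure theorem for simple rings coinciding with their socle is needed to close the infinite case, and with it your argument becomes a correct variant of the paper's.
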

\begin{proof}
By \cite[Theorem 1, Chapter IV, \S 3]{jacobson} the homogeneous components of the socle are simple algebras. The socle is a completely reducible module in the terminology of Jacobson, hence \cite[Corollary, Chapter IV, \S 2]{jacobson} gives that the socle is the direct sum of its homogeneous components which are of the form $\text{End}_K(A_K(\G)1_{\{x\}})\cong \mathcal{M}_n(K)$ where $n=\dim(A_K(\G)1_{\{x\}})$ (possibly infinite).
In order to compute $n$, note that the set $C=\{1_{\{\g\}}\colon \g\in s^{-1}(x)\}$ is a system of generators of $A_K(\G)1_{\{x\}}$. Indeed, for any compact open bisection $B$ we have $1_B1_{\{x\}}=1_{B\{x\}}=1_{\{\g\}}$ for some $\g\in s^{-1}(x)$. So let us prove that $C$ is a linearly independent set.
Assume $\sum_i r_i 1_{\{\g_i\}}=0$ where the $\g_i\in s^{-1}(x)$. Take one $\g_k$ of those in the previous sum and compute $1_{\{\g_k^{-1}\}}\sum_i r_i 1_{\{\g_i\}}=0$.
We have $0=\sum_i r_i 1_{\{\g_k^{-1}\}}1_{\{\g_i\}}=\sum_i r_i 1_{\{\g_k^{-1}\g_i\}}=r_k 1_{\{x\}}$ implying $r_k=0$. Thus $C$ is a basis of the ideal $A_K(\G)1_{\{x\}}$. Then the homogeneous component $\text{End}_K(A_K(\G)1_{\{x\}})\cong \mathcal{M}_n(K)$.
\end{proof}

{\bf Conflict of interest statement:}
All authors declare that they have no conflicts of interest. 

{\bf Data availability statement:}  The authors confirm that the data supporting the findings of this study are available within the article.

\bibliographystyle{plain} 
\bibliography{ref}
\end{document}